\newtheorem{theorem}{Theorem}
\newtheorem{corollary}{Corollary}
\newtheorem{lemma}{Lemma}
\theoremstyle{remark}
\newcommand{\re}{\text{\rm Re }}
\newcommand{\im}{\text{\rm Im }}
\newcommand{\s}{\vspace{0.3cm}}
\begin{document}
\title[Singular solutions...]{Singular  solutions to the
L\"owner equation}
\author[D.~Prokhorov  and A.~Vasil'ev]{Dmitri Prokhorov  and Alexander Vasil'ev}

\thanks{Partially supported by
RFBR (Russia) 07-01-00120, by the grant of the Norwegian Research Council \#177355/V30, and by the
European Science Foundation RNP HCAA} \subjclass[2000]{Primary 30C35, 30C20; Secondary
30C62} \keywords{Univalent function, L\"owner equation, slit map}
\address{D.~Prokhorov: Department of Mathematics and Mechanics, Saratov State University,
Saratov 410012,
Russia}
\email{ProkhorovDV@info.sgu.ru}
\address{A.~Vasil'ev: Department of Mathematics, University of Bergen, Johannes Brunsgate 12,
Bergen 5008, Norway}
\email{alexander.vasiliev@math.uib.no}

\begin{abstract}
We consider the L\"owner differential equation in ordinary derivatives generating univalent self-maps of the unit disk (or
of the upper half-plane). If the solution to this equation represents a one-slit map, then the driving term 
is a continuous function. The reverse statement is not true in general as a famous Kufarev's example shows.
We address the following problem: to find a condition for  the L\"owner equation to generate one-slit solutions.
New examples of non-slit solutions to the L\"owner equation are presented and a comparison with the L\"owner PDE is given. Properties of singular slit solutions in the half-plane are revealed.
\end{abstract}
\maketitle

\section{Introduction}

The L\"owner parametric method has proved to be one of the powerful tools in geometric function theory by
means of which the most intriguing Bieberbach problem was finally solved  by de Branges in 1984.  
The famous L\"owner equation was introduced in a seminal 1923 paper \cite{Loewner}. Since then
many deep results were obtained most of which were related to extremal problems in the classes
of univalent functions. Stochastic version of the L\"owner equation was introduced by Schramm 
and it became an actively developing topic recently. However during the last decade, it turned out that the geometry of solutions to the classical L\"owner equation is still less known. In particular, L\"owner himself \cite{Loewner}
studied one-slit self-maps of the unit disk looking for a representation of a dense subclass of the class
of all univalent normalized functions in the unit disk. The one-slit evolution led him to the L\"owner
equation with a continuous driving term. Later in 1947, Kufarev gave an example of a solution to the L\"owner
equation with a continuous driving term, and such that the image of the unit disk under this solution
represents a family of hyperbolic half-planes. This brilliant piece was obtained in a way of explicit integration
of the L\"owner equation in some particular case. Since then, it has seemed to be a unique exception in the general
picture of slit solutions. 

Let us consider the following problem: {\it Under which conditions to the L\"owner equation with a continuous driving term the solution represents
a one-slit  map?}

The first simple sufficient condition to the L\"owner equation to have a one-slit solution can be found in
\cite[page 59]{Aleksandrov}. Namely, if the driving term has bounded first time derivative, then the solution
maps the unit disk onto itself minus a slit along a $C^1$ Jordan curve. A non-trivial sufficient condition appeared only in 2005 by Marshall and Rohde  \cite{Marshall}. The condition was given in terms of analytic properties Lip(1/2) (H\"older continuous with  exponent 1/2) of the driving term, and the 1/2-norm of it was required to be bounded. The sharp
bound 4 for this norm was found by Lind   \cite{Lind} in the same year. Observe that there is no upper bound for the driving
term which was shown in \cite{Kadanoff}.  In fact, Marshall and Rohde  \cite{Marshall} showed
that under these conditions the slit will be even quasisymmetric and situated in a Stolz angle (quasislit).

Let us observe that the driving term in Kufarev's example is also  Lip(1/2) and the 1/2-norm is equal to $3\sqrt{2}= 4.24264\dots$, which is not too far from the sharp constant 4. So it is less probable to expect the complete
answer to the problem in terms of the analytic properties of the driving term.

Our main idea is to compare the one-slit dynamics in the unit disk generated by the L\"owner ODE with  that of the subordination chains and in the PDE version of the L\"owner equation for which the L\"owner ODE is a characteristic equation. The result states that a  possibility
for the L\"owner ODE to have non-slit solutions with a continuous driving term corresponds to the slit
subordination evolution which is singular at the some moment, i.e., some non-zero area is added after this moment.
We analyze  Kufarev's example from this viewpoint and give new examples of Kufarev type. We point out that the situation with
L\"owner PDE is different. To this end we analyze a result by Pommerenke \cite{Pomme} in this direction.  Finally, we study some properties of singular solutions to the L\"owner equation at the initial moment.

\section{L\"owner equations}

In this section we give a short overview of the alternatives of the L\"owner equation we are working with.
Let us start with the classical L\"owner subordination and the corresponding L\"owner PDE. For the details
we refer to the classical Pommerenke's monograph \cite{Pommerenke}.

A L\"owner subordination chain $\Omega(t)\subset \mathbb C$ is described by the time-dependent family of conformal maps $F(z,t)$
from the unit disk $\mathbb D=\{z:\,|z|<1\}$ onto $\Omega(t)$, normalized by $F(z,t)=e^tz+a_2(t)z^2+\dots$. In the 1923 seminal L\"owner's paper \cite{Loewner}, the domain $\Omega(t)$ was the complex plane $\mathbb C$ minus
a slit along a Jordan curve with a unique finite tip going to infinity for every moment $t\in [0,\infty)$.

Given a subordination
chain of one-slit domains $\Omega(t)$ defined for $t\in [0,\infty)$, there exists
a continuous real-valued function $u(t)$,
 such that 
\begin{equation}
\dot{F}(z,t)=zF'(z,t)\frac{e^{iu(t)}+z}{e^{iu(t)}-z},\label{Loewner1}
\end{equation}
for $z\in \mathbb D$ and for  all $t\in [0,\infty)$. Here $\dot{F}$ and $F'$ stand for $t$- and $z$- derivatives respectively.
 
The initial condition
$F(\zeta,0)=F_0(\zeta)$ is not given on the characteristics of the
partial differential equation (\ref{Loewner1}), hence the solution exists
and is unique. Assuming $s$ as a parameter along the characteristics
we have $$ \frac{dt}{ds}=1,\quad \frac{dz}{ds}=-z
\frac{e^{iu(t)}+z}{e^{iu(t)}-z}, \quad \frac{dF}{ds}=0,$$ with the initial conditions
$t(0)=0$, $z(0)=\zeta$, $F(z,0)=F_0(z)$, where $z$ is in
$\mathbb D$.  Obviously, $t=s$. Observe that the domain of $z$ is the entire unit disk, however the solutions to
the second equation of the characteristic system range within the unit disk but do not fill it. 
Therefore, introducing another letter $w$ in order to distinguish the function $w(\zeta,t)$  from the variable $z$, we arrive at the Cauchy problem for the  L\"owner
equation in ordinary derivatives for a function $z=w(\zeta,t)$
\begin{equation}
\frac{dw}{dt}=-w\frac{e^{iu(t)}+w}{e^{iu(t)}-w},\label{Loewner2}
\end{equation}
with the initial condition $w(\zeta,0)=\zeta$. The equation (\ref{Loewner2}) is the non-trivial  characteristic
equation for (\ref{Loewner1}). 

In order to guarantee  the solution $F_0(w^{-1}(z,t))$ to \eqref{Loewner1} to be univalent for all $t\in [0,\infty)$, we must extend it to the whole unit disk $\mathbb D$. As it was observed in \cite{ProkhVas}, it can be done when the initial map $F_0$ is chosen to be the limit
\[
F_0(z)=\lim\limits_{t\to\infty}e^tf(z,t),\quad z\in \mathbb D,
\]
where $f(z,t)=e^{-t}z(1+c_1(t)z+\dots)$ is a solution to the equation
\begin{equation}
\frac{df}{dt}=-f\frac{e^{iu(t)}+f}{e^{iu(t)}-f}, \quad f(z,0)\equiv z, \label{Loewner3}
\end{equation}
with the same continuous driving term $u(t)$ on $t\in [0,\infty)$ as in \eqref{Loewner1}. Moreover, $f(z,t)$ can be
represented by the solution to \eqref{Loewner1} as $f(z,t)=F^{-1}(F_0(z),t)$.

Let us give here the half-plane version of the L\"owner equation. First of all, let us observe that
if $f$ is a slit solution to the equation \eqref{Loewner3}, then the endpoint of the slit on $\mathbb T=\partial \mathbb D$ may change in time $t$ as well as its shape. It makes it difficult to follow the dynamics of the slit growth as well
as its geometric properties. So the new trends in research in L\"owner theory  suggest to work with
mappings from the evolution domain to a canonical domain, the half-plane in our case,

 Let $\mathbb H=\{z: \im
z>0\}$, $\mathbb R=\partial \mathbb H$. Let us consider the growing slit $\gamma_t$ along a Jordan curve $\{w\in \gamma_t\Leftrightarrow w=\gamma(t), t\in [0,\infty)\}$
in $\mathbb H$ from the origin $\gamma(0)=0$ to a finite point of $\mathbb H$. The functions $h(z,t)$, with the hydrodynamic normalization near infinity as
$h(z,t)=z+2t/z+O(1/z^2)$,  solving the equation
\begin{equation}
\frac{dh}{dt}=\frac{2}{h-\lambda(t)}, \quad h(z,0)\equiv z, \label{Loewner4}
\end{equation}
map $\mathbb H\setminus \gamma(t)$ onto  $\mathbb H$, where $\lambda(t)$ is a real-valued
continuous driving term. 

\section{Kufarev's example and singular L\"owner maps}

As it was mentioned in Introduction, there are two known sufficient conditions that guarantee slit
solutions to the L\"owner equation \eqref{Loewner3}. The first one is found in
\cite[page 59]{Aleksandrov}. It states that  if the driving term $u(t)$ has bounded first  derivative, then the solution $f(z,t)$
maps the unit disk onto itself minus a slit along a $C^1$ Jordan curve. 

The second one belongs to Marshall and Rohde \cite{Marshall}. Their result
states that if $u(t)$ is Lip(1/2) (H\"older continuous with  exponent 1/2), and if for a certain
constant $C_{\mathbb D}>0$, the norm $\|u\|_{1/2}$ is bounded $\|u\|_{1/2}<C_{\mathbb D}$, then the solution $f(z,t)$ is a
slit map, and moreover, the Jordan arc $\gamma(t)$ is a quasislit (a quasisymmetric image of an interval within a Stolz angle). As they also proved, a
converse statement without the norm restriction holds. The absence of the norm restriction in
the latter result is essential. On one hand, Kufarev's example \cite{Kufarev} contains
$\|u\|_{1/2}=3\sqrt{2}$, which means that $C_{\mathbb D}\leq 3\sqrt{2}$. On the other hand,
Kager, Nienhuis, and Kadanoff  \cite{Kadanoff} constructed exact slit solutions to the
half-plane version of the L\"owner equation with arbitrary norms of the driving term.

The question about the slit maps and the behaviour of the driving
term $\lambda(t)$ in the case of the half-plane $\mathbb H$ was addressed by Lind \cite{Lind}.
The techniques used by Marshall and Rohde carry over to prove a similar result in the case of
the equation (\ref{Loewner4}), see \cite[page 765]{Marshall}. Let us denote by $C_{\mathbb H}$ the
corresponding bound for the norm $\|\lambda\|_{1/2}$. The main result by Lind is the sharp
bound, namely $C_{\mathbb H}=4$. As it was remarked in \cite{ProkhVas2}, $C_{\mathbb H}=C_{\mathbb D}=4$.

Let us consider Kufarev's example \cite{Kufarev} in details. Set the function $$u(t)=3\arcsin \sqrt{1-e^{-2t}}.$$
It increases from $0$ to $3\pi/2$ as $t$ varies in $[0,\infty)$. Solving equation \eqref{Loewner3} with this
driving term we obtain
\[
f(z,t)=\frac{1}{\cos u(t)}\left(z+e^{2iu(t)}-\sqrt{(1-z)(e^{2iu(t)}-z)}\right).
\]
This solution maps the unit disk onto the hyperbolic half-plane $\mathbb H_h(t)$ in the unit disk bounded by the circular arc
orthogonal to $\mathbb T$ joining the points $e^{iu(t)}$ and $e^{3iu(t)}$, $f(1)=e^{iu(t)}$, $f(e^{4iu(t)})=e^{3iu(t)}$.

Comparing Kufarev's example with the Marshall and Rohde result we see that the above driving term is Lip(1/2) and the 1/2-norm is equal to $3\sqrt{2}= 4.24264\dots$, i.e., it is very close to the Marshall and Rohde condition.  Therefore, the engine
forcing the equation to generate such a singular behavior differs from just analytic properties of the driving term. 

Let us consider the corresponding subordination evolution and the solution $F$ to the equation \eqref{Loewner1}.
The map $F_0$ is given as
\[
F_0(z)=\lim\limits_{t\to \infty}e^tf(z,t)=\frac{z}{1-z}.
\]
It maps the unit disk onto the half-plane $\re w>-\frac{1}{2}$. The solution $F$ to the equation \eqref{Loewner1} is given
by the formula $F(w,t)=F_0(f^{-1}(w,t))$, which in the explicit form becomes
\[
F(w,t)=\frac{e^t w-e^{-2i\alpha(t)}w^2}{(1-e^{-i\alpha(t)}w)^2},\quad \alpha=\arccos (e^{-t})\in[0,\pi/2).
\]
The function $F(w,t)$ maps the hyperbolic half-plane $\mathbb H_h(t)$ onto the half-plane $\re w>-\frac{1}{2}$. By reflection we extend $F(w,t)$ into the whole disk $\mathbb D$ and the extension $F(z,t)$ maps the unit disk onto the complex plane $\mathbb C$ minus the slit along the vertical ray
 $$\{w: w=-\frac{1}{2}+iy, y\in(-\infty, \frac{1}{2}\cot 2\alpha(t)]\}.$$
Now it becomes clear that the singular behavior of Kufarev's example is due to the topology change in the image of $\mathbb D$ by $F(z,t)$ after the initial moment $t=0$. In fact, we add a non-zero area at the initial moment.

Based on above considerations let us give some answer to the problem formulated in Introduction. Let $F_0(z)=z+a_2z^2+\dots$ be a conformal map of the disk $\mathbb D$ onto the domain $\Omega_0\subset \mathbb C$, $0\in \Omega_0$, bounded by a curve $\Gamma=\{\Gamma(t),t\in(0,\infty)\}$, which is a homeomorphic image of the open interval $(0,\infty)$, and such that its closure $\hat\Gamma$ is $\partial \Omega_0$. By construction, it is clear that $\hat{\Gamma}$ meets itself at most once (possibly at infinity). If it meets itself, then  the complement to $\Omega_0\cup \hat{\Gamma}$ has non-zero (possibly infinite) area. Without loss of generality let us assume
that the right endpoint $\infty$ of the interval corresponds to $\infty\in \hat{\Gamma}$. There exists a point $t_0\in(0,\infty]$
such that $\Gamma(t_0)= \lim_{t\to +0}\Gamma(t)$. Denote by $\Gamma_t=\Gamma[t,\infty)$. We choose the parametrization of $\Gamma_t$, $t\in (0,\infty)$ such that the conformal radius of $\mathbb C \setminus \Gamma_t$ is equal to $e^t$. Now let us construct the subordination chain of functions $F(z,t)$ that map $\mathbb D$ onto $\mathbb C \setminus \Gamma_t$. It satisfies the L\"owner equation \eqref{Loewner1} with some continuous driving term $u(t)$.
We construct $f(z,t)=F^{-1}(F_0(z),t)$, $z\in \mathbb D$. It satisfies the L\"owner ODE \eqref{Loewner3} with the same
driving term. At the same time the complement of $f(\mathbb D,t)$ to $\mathbb D$ has non-zero area, and therefore,
$f(z,t)$ is not a slit map.

If the curve $\hat{\Gamma}$ does not meet itself, then $f(z,t)=F^{-1}(F_0(z),t)$ represents a slit evolution.

We formulate above in the following theorem.

\begin{theorem}\label{th1}
Slit evolution in the unit disk given by the solution $f(z,t)$ to the L\"owner ODE \eqref{Loewner3} is controlled
by the subordination evolution given by the solution $F(z,t)$ to the corresponding L\"owner PDE \eqref{Loewner1}. More precisely,
if $F(\mathbb D,0)$ is bounded by the above defined curve $\hat{\Gamma}$ that meets itself once, then $f$ does not represent a slit
evolution at any time, moreover this evolution is of Kufarev type: the complement of $f(\mathbb D,t)$ to $\mathbb D$ has non-zero area. If  $\hat{\Gamma}$ is a Jordan curve, then $f$ represents a slit evolution.
\end{theorem}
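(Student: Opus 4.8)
The plan is to derive everything from the single identity $f(z,t)=F^{-1}(F_0(z),t)$, which already encodes the coupling between the ODE \eqref{Loewner3} and the PDE \eqref{Loewner1}. Since $F_0(\mathbb D)=\Omega_0$ and $F(\cdot,t)$ maps $\mathbb D$ conformally onto $\mathbb C\setminus\Gamma_t$, the first step is to record that
\[
f(\mathbb D,t)=F^{-1}(\Omega_0,t),\qquad
\mathbb D\setminus f(\mathbb D,t)=F^{-1}\big((\mathbb C\setminus\Omega_0)\setminus\Gamma_t,\,t\big),
\]
the second formula being legitimate because $\Gamma_t\subset\hat\Gamma=\partial\Omega_0$ is disjoint from the open set $\Omega_0$, so $\Omega_0\subset\mathbb C\setminus\Gamma_t$ and the composition is defined with image inside $\mathbb D$. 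Thus the question of whether $f(\cdot,t)$ is a slit map is reduced to measuring the set $(\mathbb C\setminus\Omega_0)\setminus\Gamma_t$ and transporting that measurement through the conformal homeomorphism $F^{-1}(\cdot,t)$, which carries open sets to open sets and hence positive-area sets to positive-area sets.

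Second I would check that for every $t>0$ the tail $\Gamma_t=\Gamma[t,\infty)$ is a \emph{simple} arc, so that $\mathbb C\setminus\Gamma_t$ is a genuine connected slit domain and $F(\cdot,t)$ is the conformal map the construction requires. This is the point where the hypothesis ``meets itself at most once'' is used: the only self-contact of $\hat\Gamma$ is produced through the excluded left endpoint $\lim_{s\to+0}\Gamma(s)$, and deleting the initial piece $\Gamma(0,t)$ removes that contact for each $t>0$. I would also confirm the initial normalization $f(z,0)=z$, i.e.\ $F(z,0)=F_0(z)$, from Carath\'eodory kernel convergence $\mathbb C\setminus\Gamma_t\to\Omega_0$ as $t\to+0$ together with uniqueness of the normalized conformal map.

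With this in place the dichotomy is immediate. If $\hat\Gamma$ meets itself once, at a finite point or at infinity, the self-contact cuts off a non-empty open region $R\subset\mathbb C\setminus\Omega_0$ of positive (possibly infinite) area: the bounded loop interior in the finite case, the opposite half-plane-type region in the Kufarev case. Since $R$ consists of interior points it avoids the boundary curve, whence $R\cap\Gamma_t=\emptyset$ and $R\subset\mathbb C\setminus\Gamma_t$ for all $t>0$; consequently $F^{-1}(R,t)$ is a non-empty open subset of $\mathbb D\setminus f(\mathbb D,t)$ of positive area, for every $t>0$. This is exactly Kufarev-type behaviour: $f(\cdot,t)$ is never a slit map. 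If instead $\hat\Gamma$ does not meet itself, then $\mathbb C\setminus\Omega_0=\hat\Gamma$ has zero area, so $(\mathbb C\setminus\Omega_0)\setminus\Gamma_t$ is the initial sub-arc $\Gamma(0,t]$, a zero-area arc issuing from the slit tip $\Gamma(t)$; its image $F^{-1}(\Gamma(0,t],t)$ is an arc emanating from the boundary point $F^{-1}(\Gamma(t),t)\in\mathbb T$ into $\mathbb D$, so $f(\mathbb D,t)$ is the disk minus a slit and $f$ represents a slit evolution.

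The part I expect to be genuinely delicate is not the area bookkeeping but the analytic backbone invoked throughout: that the one-slit subordination $t\mapsto\mathbb C\setminus\Gamma_t$ with the conformal-radius parametrization really does produce a \emph{continuous} driving term $u(t)$, and that $F$ and $f$ solve \eqref{Loewner1} and \eqref{Loewner3} with this common $u$. I would quote the classical one-slit L\"owner theory for this and take care only at $t\to+0$, where the domains degenerate and one must verify that the kernel limit is $\Omega_0$ rather than the larger connected complement $\mathbb C\setminus\hat\Gamma$. This is precisely the moment at which the non-zero area is added, and it is the mechanism behind the singular behaviour.
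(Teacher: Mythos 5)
Your proposal is correct and follows essentially the same route as the paper's own argument (the paragraph preceding Theorem~\ref{th1}): both rest on the identity $f(z,t)=F^{-1}(F_0(z),t)$ for the chain $F(\cdot,t):\mathbb D\to\mathbb C\setminus\Gamma_t$ parametrized by conformal radius, on the classical one-slit theory supplying a common continuous driving term for \eqref{Loewner1} and \eqref{Loewner3}, and on the observation that the region cut off by the self-contact of $\hat{\Gamma}$ lies in $\mathbb C\setminus\Gamma_t$ for all $t>0$ and is carried by $F^{-1}(\cdot,t)$ onto a positive-area open subset of $\mathbb D\setminus f(\mathbb D,t)$. The difference is only one of detail: you make explicit the set identities, the simpleness of the tails $\Gamma_t$, and the Carath\'eodory kernel limit $\Omega_0$ as $t\to+0$, all of which the paper leaves implicit.
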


To clear up the complete picture let us analyze the analogous problem with subordination chains and with the L\"owner PDE \eqref{Loewner1}.
Pommerenke \cite{Pomme} gave a necessary and sufficient condition for the geometry of a subordination chain of domains so that  the 
corresponding subordination chain of mapping functions satisfy the L\"owner PDE \eqref{Loewner1} with a continuous driving term. Namely, he proved the following result \cite[Theorem 1]{Pomme} (slightly reformulated for our setup and notations).  

\begin{theorem}\label{thPomme} {\rm (Pommerenke \cite{Pomme}).}
Let $F(z,t)$ be a subordination chain
of functions normalized as $F(z,t)=e^tz+a_2(t)z^2+\dots$ in the unit disk $\mathbb D$ corresponding to the subordination chain of domains $\Omega(t)$, $F(\mathbb D,t)=\Omega(t)$, $t\geq 0$. The functions $F(z,t)$ satisfy the L\"owner PDE \eqref{Loewner1} with a continuous driving term
$u(t)$, if and only if, for every $\varepsilon>0$ there exists  $\delta>0$, such that whenever $0\leq t-s\leq \delta$, $s,t\geq 0$, some cross-cut C of $\Omega(t)$ with the spherical diameter $<\varepsilon$ separates 0 from $\Omega(t)\setminus \Omega(s)$.
\end{theorem}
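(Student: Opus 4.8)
The plan is to reduce the statement to the classical L\"owner--Kufarev representation and then characterize, via the geometry, when the associated Herglotz measure is a single moving atom. Recall (see Pommerenke's monograph \cite{Pommerenke}) that for any subordination chain $\Omega(t)$ with $0\in\Omega(t)$ and conformal radius $e^t$, the normalized maps $F(z,t)$ are, for almost every $t$, differentiable in $t$ and satisfy $\dot F(z,t)=zF'(z,t)\,p(z,t)$, where $p(\cdot,t)$ is a Herglotz function, $p(z,t)=\int_{\mathbb T}\frac{e^{i\theta}+z}{e^{i\theta}-z}\,d\mu_t(\theta)$ with $\mu_t$ a probability measure on $\mathbb T$. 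Equation \eqref{Loewner1} is precisely the special case $\mu_t=\delta_{u(t)}$ with $u$ continuous. Thus everything comes down to translating the geometric cross-cut hypothesis into the analytic statement that $\mu_t$ is a single atom depending continuously on $t$, and back. Throughout I would work with the transition maps $\varphi_{s,t}=F(\cdot,t)^{-1}\circ F(\cdot,s)\colon\mathbb D\to\mathbb D$, so that $\Omega(t)\setminus\Omega(s)=F\bigl(\mathbb D\setminus\varphi_{s,t}(\mathbb D),\,t\bigr)$; the set added between times $s$ and $t$ then corresponds inside the disk to $\mathbb D\setminus\varphi_{s,t}(\mathbb D)$.

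For necessity (radial PDE with continuous $u$ implies the cross-cut condition), I would first show that $\mathbb D\setminus\varphi_{s,t}(\mathbb D)$ collapses into an arbitrarily small boundary neighbourhood of $e^{iu(t)}$ as $t-s\to0$. This is where continuity of $u$ enters: integrating the generating equation \eqref{Loewner2} over $[s,t]$ keeps the boundary singularity of the velocity field within a small arc about $e^{iu(t)}$, so for small $t-s$ the removed set lies in a small disk-cross-cut neighbourhood of that arc. Choosing a circular cross-cut $\sigma$ of $\mathbb D$ near $e^{iu(t)}$ that separates $0$ from this set, I would take $C=F(\sigma,t)$ and argue that its \emph{spherical} diameter is small. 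The genuinely technical point here is the spherical distortion estimate: a conformal map may stretch enormously in the Euclidean metric, so one must pass to the spherical metric on $\hat{\mathbb C}$ (recall $\Omega(t)$ may be unbounded, as in the Kufarev chain) and use a length--area/modulus argument showing that a family of short cross-cuts near a single prime end has images of spherical diameter tending to zero uniformly. Equicontinuity of the normal family $\{F(\cdot,t)\}$ together with the fixed conformal radius makes this quantitative.

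For sufficiency (cross-cut condition implies the radial form with continuous $u$), I would start from the general representation and use the geometry to pin down $\mu_t$. The support of $\mu_t$ records the boundary location(s) at which $\Omega(t)$ grows; the hypothesis says this growth is so localized that, as $t-s\to0$, the increment $\Omega(t)\setminus\Omega(s)$ is cut off from $0$ by cross-cuts of spherical diameter tending to $0$. Transferring back to the disk via $\varphi_{s,t}$ and letting $s\uparrow t$, Carath\'eodory kernel convergence forces $\mathbb D\setminus\varphi_{s,t}(\mathbb D)$ to shrink to a single boundary point; were $\operatorname{supp}\mu_t$ to contain two separated points, growth would occur simultaneously near two separated prime ends and no single small cross-cut could isolate the increment from $0$. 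Hence $\mu_t=\delta_{u(t)}$ and \eqref{Loewner1} holds. Continuity of $u$ then follows from the uniformity of $\delta$ in $\varepsilon$: a jump of $u$ at some $t_0$ would make, for $s<t_0<t$ with $t-s$ small, the increment straddle two separated arcs near $e^{iu(t_0^-)}$ and $e^{iu(t_0^+)}$, again contradicting the existence of a single small separating cross-cut.

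I expect the main obstacle to be the sufficiency direction, specifically the step that upgrades ``localized support'' to ``single atom'' and then to ``continuous atom.'' Relating the Herglotz measure $\mu_t$ to the harmonic-measure derivative of the growing boundary, and controlling this relation uniformly in the spherical metric so that unbounded $\Omega(t)$ is handled, is the delicate analytic core; the prime-end bookkeeping that certifies the increment genuinely concentrates at a single prime end is where the argument must be made airtight.
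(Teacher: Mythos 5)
First, a point of order: the paper does not prove this statement at all --- it is quoted, with attribution, as Theorem 1 of Pommerenke's paper \cite{Pomme}, ``slightly reformulated'' for the authors' notation, and no argument is given. So your proposal can only be judged against the classical proof, not against anything in this paper. Your overall frame is the historically correct one: work with the transition maps $\varphi_{s,t}=F(\cdot,t)^{-1}\circ F(\cdot,s)$, translate the cross-cut condition into localization of the omitted sets $\mathbb D\setminus\varphi_{s,t}(\mathbb D)$ at a single boundary point, and connect this to a Poisson--Herglotz representation. Your necessity direction is essentially sound: continuity of $u$ localizes the omitted set near $e^{iu(t)}$ via the ODE \eqref{Loewner2}, and the length--area argument in the spherical metric (the univalent image has spherical area at most $4\pi$, so some radius $r\in[\rho,\sqrt\rho]$ gives an arc with image of small spherical length, uniformly in $t$) produces the required cross-cut. (One caveat you inherit from the statement itself: continuity of $u$ on $[0,\infty)$ yields this localization uniformly only on compact $t$-intervals, whereas the condition as stated is uniform in $s,t\geq 0$.)

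The sufficiency direction, however, contains a genuine error and a genuine gap, and you concede the gap yourself. The error: ``Carath\'eodory kernel convergence forces $\mathbb D\setminus\varphi_{s,t}(\mathbb D)$ to shrink to a single boundary point'' is false. Kernel convergence, i.e.\ $\varphi_{s,t}\to\mathrm{id}$ as $s\uparrow t$, is automatic from $\varphi_{s,t}'(0)=e^{s-t}\to 1$ and the Schwarz lemma, and it only says the omitted set eventually avoids each compact subset of $\mathbb D$; it is perfectly compatible with omitted sets that are thin collars along a long boundary arc and localize at no point. The localization must be extracted from the hypothesis: pull the small spherical cross-cut $C$ of $\Omega(t)$ back under $F(\cdot,t)$, note that $C$ stays at definite spherical distance from $0$ (its endpoints lie on $\partial\Omega(t)$, which avoids the Koebe disk $|w|<1/4\subset\Omega(s)$), and then a Gr\"otzsch-type modulus estimate shows that the component of $\mathbb D\setminus F^{-1}(C,t)$ not containing $0$ --- which contains $\mathbb D\setminus\varphi_{s,t}(\mathbb D)$ --- has small diameter. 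This inverse-direction distortion lemma appears nowhere in your proposal; you only control distortion in the forward direction (necessity), and small spherical diameter of $C$ does not transfer to its preimage for free. The gap: even granting localization and $\mu_t=\delta_{u(t)}$ for a.e.\ $t$, the theorem asserts the PDE \eqref{Loewner1} holds with a continuous $u$, and your route through the a.e.-in-$t$ Herglotz representation leaves unproven (i) that the localized omitted sets determine a point $e^{iu(t)}$ for \emph{every} $t$ with $u$ continuous, and (ii) the difference-quotient limit
\[
\frac{z-\varphi_{s,t}(z)}{t-s}\;\longrightarrow\; z\,\frac{e^{iu(t)}+z}{e^{iu(t)}-z},
\]
which in the classical argument comes from the representation $\log\bigl(\varphi_{s,t}(z)/z\bigr)=\int_{\mathbb T}\frac{e^{i\theta}+z}{e^{i\theta}-z}\,d\nu_{s,t}(\theta)$ with $\nu_{s,t}\leq 0$ of total mass $s-t$ supported on a small arc. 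You flag exactly this as the ``delicate analytic core'' you expect to be the obstacle --- which is to say that the central implication of the theorem is left unproved.
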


Theorems 1 and 2 show that the subordination chain can be based on slit erasing whereas the dynamics in the unit disk
is non-slit. 

\s
\noindent
{\sc Example.} Let us give an example of the function $f(z,t)=e^{-t}z+c_1z^2+\dots$, that satisfies the L\"owner equation \eqref{Loewner3},
and for each fixed $t$ maps the unit disk $\mathbb D$ onto $\mathbb D$ minus a region with non-zero area.  Let $F_0(z)\equiv z$. The map $f(z,t)=F^{-1}(z,t)$ possesses the required properties, where
\[
F(z,t)=1-\frac{1}{w\circ\zeta(e^{-i\alpha}z,t)}.
\]
Here the function
\[
\zeta=i\frac{1-a+z(1-\bar{a})}{1+a-z(1+\bar{a})},\quad \mbox{with \ } a=a(\lambda)=\frac{1+i\zeta_0(\lambda)}{1-i\zeta_0(\lambda)},
\]
maps the unit disk onto the half-plane $\{\zeta: \, \im \zeta >0\}$, the origin is mapped onto the point $\zeta_0(\lambda)$ which is a unique solution
with the positive imaginary part to the equation
\[
(3\zeta^2+3(1+\lambda)\zeta-\lambda)\zeta^{-3/2}=-2(3+\lambda),\quad \lambda=\lambda(t)\in(-3,0).
\]
The function 
\[
w=w(\zeta,t)=\frac{1}{2}\left(1-\frac{3\zeta^2+3(1+\lambda)\zeta-\lambda}{2(3+\lambda)}\zeta^{-3/2}\right)
\]
maps the upper half-plane $\{\zeta: \, \im \zeta >0\}$ onto $\mathbb C$ minus two rectilinear slits. The first one is along the negative
real axis $(-\infty, 0]$, and the second is along the vertical ray \[\{w:\, w=\frac{1}{2}+iy,\, y\in (-\infty,-\frac{1+3\lambda}{2(3+\lambda)}(-\lambda)^{-1/2}]\}.\]
The points 1 and $\lambda$ are mapped onto the  finite tips of these rays respectively, and the point $\zeta_0$ is mapped onto 1. This map can be found, e.g., in \cite{Koppenfels}. The function $1-\frac{1}{w}$ maps the above configuration onto $\mathbb C$ minus a slit consisting of two parts $\gamma_1$ and $\gamma_2$, where $\gamma_1=[1,\infty)$ and $\gamma_2$ is the circular arc 
\[
\gamma_2=\{z:\, z=e^{i\theta},\, \theta\in (2\arctan \frac{3+\lambda}{1+3\lambda}\sqrt{-\lambda},2\pi].\}
\]
The parameter $\lambda=\lambda(t)$ is defined from the equation
\[
\frac{3}{8(3+\lambda)}|\zeta_0^{-1/2}(\lambda)-(1+\lambda)\zeta_0^{-3/2}(\lambda)+\lambda \zeta_0^{-5/2}(\lambda)|\frac{1-|a(\lambda)|^2}{|1+a^2(\lambda)|}=2e^t,
\]
and the parameter $\alpha$ is
\[
\alpha(t)=\frac{3\pi}{2}-2\arg(1+a)+\arg(\zeta_0^{-1/2}(\lambda)-(1+\lambda)\zeta_0^{-3/2}(\lambda)+\lambda \zeta_0^{-5/2}(\lambda)).
\]
The function $F^{-1}(z,t)$ maps $\mathbb C\setminus (\gamma_1\cup\gamma_2)$ onto the whole unit disk $\mathbb D$. The function $F_0(z)$ is the identity map in $\mathbb D$. Therefore, $f(z,t)=F^{-1}(F_0(z),t)\equiv F^{-1}(z,t)$ maps $\mathbb D$ onto $\mathbb D$ minus a region bounded by the arc
$F^{-1}(\gamma_2,t)$ and the arc of $\mathbb T$ defined by the endpoints of $\gamma_2$. The slit evolution of the function $F$
assures that it satisfies the L\"owner equation \eqref{Loewner1} with some continuous driving term $u(t)$, and therefore, the function
$f$ satisfies the L\"owner equation \eqref{Loewner3} with the same driving term. 

\begin{figure}[ht] \scalebox{0.7}{
\begin{pspicture}(3,1)(17,7)
\pscircle[linewidth=0.15mm, fillstyle=solid,
fillcolor=lightgray](5,4){2} \rput(4.7,3.7){0}
\rput(5.3,7){$\eta$}\rput(8,4.3){$\xi$}
\rput(3.7,3){$\mathbb D$}\rput(7.3,3.7){$1$}
\psline[linewidth=0.15mm]{->}(5,1)(5,7)
\psline[linewidth=0.15mm]{->}(1,4)(8,4)
 \pscircle[fillstyle=solid,
fillcolor=black](5,4){.1}
\pscircle[linecolor=blue,linewidth=1mm, fillstyle=solid,
fillcolor=lightgray](15,4){2} \rput(14.7,3.7){0}
\rput(15.3,7){$y$}\rput(18,4.3){$x$}
\rput(13.7,3){$U$}\rput(17.3,3.7){$1$}
\psline[linewidth=0.15mm]{->}(15,1)(15,7)
\psline[linewidth=0.15mm]{->}(11,4)(18,4)
 \pscircle[fillstyle=solid,
fillcolor=black](15,4){.1}
\pscurve[linewidth=0.8mm,
linecolor=red]{->}(8,5)(9.5,5.5)(11,5)
\rput(9.5,5.9){$F_0(\zeta)\equiv\zeta$}
\end{pspicture}}
\end{figure}

\begin{figure}[ht] \scalebox{0.7}{
\begin{pspicture}(3,1)(17,7)
\pscircle[linewidth=0.15mm, fillstyle=solid,
fillcolor=lightgray](5,4){2} \rput(4.7,3.7){0}
\rput(5.3,7){$\eta$}\rput(8,4.3){$\xi$}
\rput(3.7,3){$\mathbb D$}\rput(7.3,3.7){$1$}
\psline[linewidth=0.15mm]{->}(5,1)(5,7)
\psline[linewidth=0.15mm]{->}(1,4)(8,4)
 \pscircle[fillstyle=solid,
fillcolor=black](5,4){.1}
\psframe[linecolor=lightgray, fillstyle=solid,
fillcolor=lightgray](10.5,0.5)(18.5,7.5)
\psarc[linecolor=blue,linewidth=1mm](15,4){2}{0}{320} \rput(14.7,3.7){0} 
\rput(15.3,7){$y$}\rput(18,4.3){$x$}
\rput(13.7,3){$U$}\rput(17.3,3.7){$1$}
\psline[linewidth=0.15mm]{->}(15,1)(15,7)
\psline[linewidth=0.15mm]{->}(11,4)(18,4)
\psline[linecolor=blue,linewidth=1mm](17,4)(18.5,4)
 \pscircle[fillstyle=solid,
fillcolor=black](15,4){.1}
\pscurve[linewidth=0.8mm,
linecolor=red]{->}(8,5)(9.5,5.5)(11,5)
\rput(9.5,5.9){$F(\zeta,t)$}
\end{pspicture}}
\caption[]{Subordination chain in Example}\label{fig4}
\end{figure}
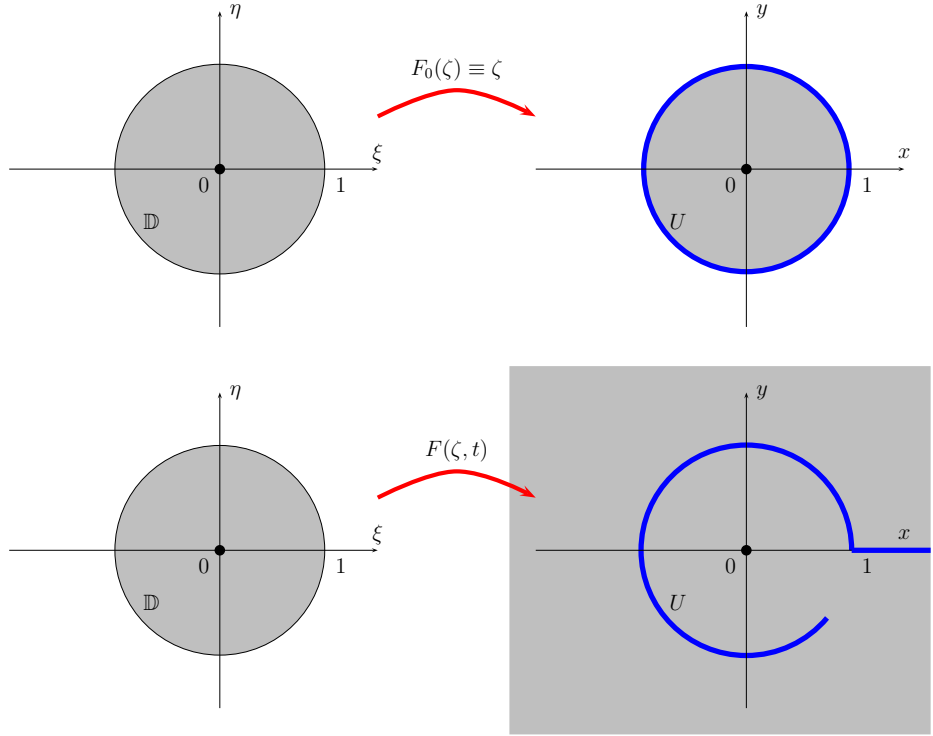

Another possible scenario satisfying the conditions of Theorem~2 is shown in Figure~2.

\begin{figure}[ht] \scalebox{0.7}{
\begin{pspicture}(3,1)(17,7)
\rput(1.7,1.7){0}
\rput(2.3,7){$\eta$}\rput(8,2.3){$\xi$}
\psline[linewidth=0.15mm]{->}(2,1)(2,7)
\psline[linewidth=0.15mm]{->}(1,2)(8,2)
\pscurve[linewidth=1mm,linecolor=blue](3, 7.5)(3.1,5)(3.2,4.2)(3.5,3)(3.7,2.6)(4,2.5)(5,3)(5,3.5)(3.1,4.6)
\pscurve[linewidth=1mm,linecolor=blue](3.1, 4.6)(3.5,5.1)(4,5.6)
 \pscircle[fillstyle=solid,
fillcolor=black](2,2){.1}
\rput(5.5,6.5){$F_0(z)$}
\psarc[linecolor=blue,linewidth=1mm](15,4){2}{60}{370} \rput(14.7,3.7){0} 
\pscurve[linewidth=1mm,linecolor=blue](16,5.75)(16.1,4.7)(17,4.3)
\psline[linewidth=1mm,linecolor=blue](16,5.75)(15.3,5.3)
\rput(15.3,7){$y$}\rput(18,4.3){$x$}
\rput(13.7,3){$U$}\rput(17.3,3.7){$1$}
\psline[linewidth=0.15mm]{->}(15,1)(15,7)
\psline[linewidth=0.15mm]{->}(11,4)(18,4)
 \pscircle[fillstyle=solid,
fillcolor=black](15,4){.1}
\rput(16.5,6.5){$f(z,t)$}
\rput(0.5,6.5){{\Large (a)}}
\rput(10.5,6.5){{\Large (b)}}
\end{pspicture}}
\caption[]{(a) $F(z,t)$ is a result of slit erasing for $F_0$;  \newline (b) The dynamics of $f=F^{-1}\circ F_0.$}
\end{figure}
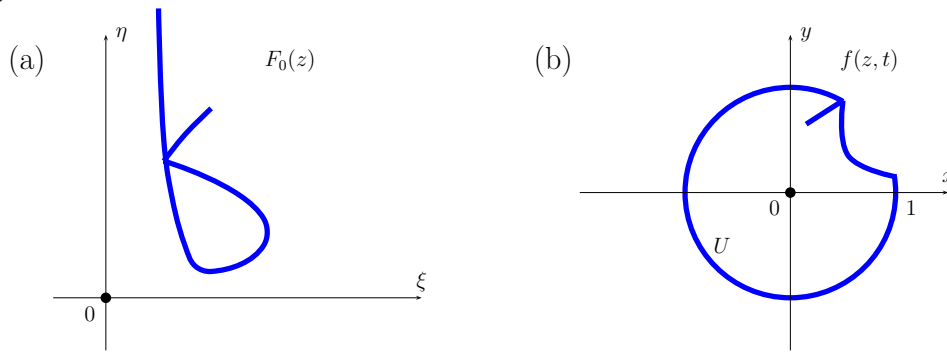

\section{Slit maps in the half-plane}

The half-plane version of the L\"owner equation deals with $\mathbb H=\{z: \im z>0\}$,
$\mathbb R=\partial \mathbb H$, and the functions $h(z,t)$, which solving  equation \eqref{Loewner4} are normalized near infinity by
$h(z,t)=z+2t/z+O(1/z^2)$.

Solutions $f(z,t)$ to equation \eqref{Loewner3} and $h(z,t)$ to equation \eqref{Loewner4}  differ in their normalization.
The coefficient $e^{-t}$ in the expansion of $f(z,t)$ is the conformal radius of $\mathbb
D\setminus \gamma(t)$, where $\gamma(t)$ is a slit along a Jordan curve starting at a point of $\mathbb T$ and ending at an interior non-zero point of $\mathbb D$, $0\not\in \gamma$. Earle and Epstein \cite{Earle} proved that if $\gamma$ has a real
analytic parametric representation $\gamma(s)$ in $(0,S]$, $\gamma(0)=1$, then the
conformal radius of $\mathbb D\setminus\gamma([s,S])$, $0<s<S$, at the origin is a real
analytic function of $s$ in $(0,S]$. In particular, $\gamma(s)$ can be the arc-length
parametrization. Hence, $t=t(s)$ and $s=s(t)$ are real analytic functions in $(0,S]$ and
$(0,T]$ respectively. Earle and Epstein \cite{Earle} also showed that  the driving term $u$ in \eqref{Loewner3} was at least $C^{n-1}$ for $C^n$-smooth slits. For $n=2$, this was extended to the situation where the parametrization $\gamma(s)$ was slightly less than $C^2$. Namely, the driving function $u$ is $C^1$ if $\gamma(s)$ is $C^1$ in $[0,S]$, $\gamma(s)$ is twice differentiable in a set $E\subset[0,S]$ of full measure and its second derivative  is locally bounded and continuous in $E$. 

The function $h(z,t)$ in \eqref{Loewner4} has the hydrodynamic normalization near infinity.
Therefore, the coefficient $2t$ at $z^{-1}$ is similar to the conformal radius $e^{-t}$ in the
disk version. The results of Earle and Epstein can be applied to the half-plane version so that
$t=t(s)$ and $s=s(t)$ are real analytic functions on $(0,S]$ and $(0,T]$ respectively for the slit 
$\gamma$ in $\mathbb H$. In the following sections we will focus on the half-plane version \eqref{Loewner4}.

The question we are considering here is concerned with the behavior of $s(t)$ at $t=0$. To pose the problem
assume that $\gamma(s)=x(s)+iy(s)$ is analytic on $[0,S]$ where $x(s)$ is even and $y(s)$ is
odd. This implies that $\gamma(t)\cup\overline{\gamma}(t)\cup\gamma(0)$ is an analytic curve
symmetric with respect to $\mathbb R$. Here we denote by $\overline{\gamma}$ the reflection of $\gamma$ with respect to
the real axis. Suppose that the L\"owner equation \eqref{Loewner4} with the
driving term $\lambda(t)$ generates a map $h(z,t)$ from $\Omega(t)=\mathbb
H\setminus\gamma(t)$ onto $\mathbb H$. Extend $h$ to the boundary $\partial\Omega(t)$ and
obtain a correspondence between $\gamma(t)\subset\partial\Omega(t)$ and a segment
$I(t)\subset\mathbb R$ while the remaining boundary part $\mathbb
R=\partial\Omega(t)\setminus\gamma(t)$ corresponds to $\mathbb R\setminus I(t)$. The image
$I(t)$ of $\gamma(t)$ can be described by solutions $h(\gamma(0),t)$ to \eqref{Loewner4} but the
initial data $h(\gamma(0),0)=\gamma(0)$ forces $h$ to be singular at $t=0$. There are two
singular solutions $h^-(\gamma(0),t)$ and $h^+(\gamma(0),t)$ such that
$I(t)=[h^-(\gamma(0),t),h^+(\gamma(0),t)]$.

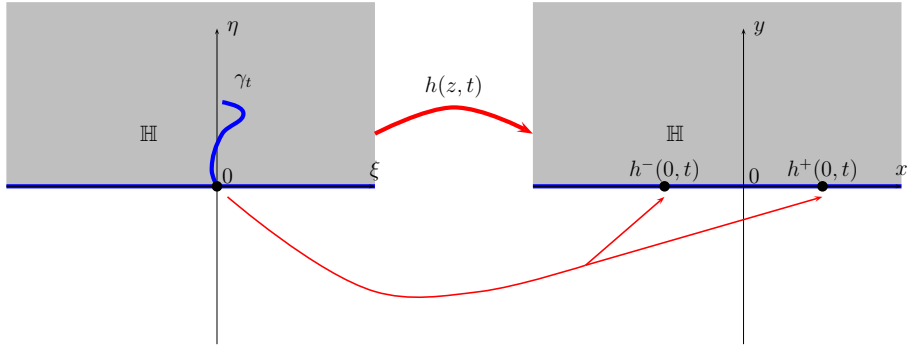
\begin{figure}[ht] \scalebox{0.7}{
\begin{pspicture}(2,1)(17,8)
\psframe[linecolor=lightgray, fillstyle=solid,
fillcolor=lightgray](11,4)(18,7.5)
\psline[linecolor=blue,linewidth=0.8mm](11,4)(18,4)
\rput(15.3,7){$y$}\rput(18,4.3){$x$}
\rput(13.7,5){$\mathbb H$}\rput(15.2,4.2){0}
\psline[linewidth=0.15mm]{->}(15,1)(15,7)
\psline[linewidth=0.15mm]{->}(11,4)(18,4)
\psframe[linecolor=lightgray, fillstyle=solid,
fillcolor=lightgray](1,4)(8,7.5)
\psline[linecolor=blue,linewidth=0.8mm](1,4)(8,4)
\rput(5.3,7){$\eta$}\rput(8,4.3){$\xi$}
\rput(3.7,5){$\mathbb H$}\rput(5.2,4.2){0}
\psline[linewidth=0.15mm]{->}(5,1)(5,7)
\psline[linewidth=0.15mm]{->}(1,4)(8,4)
\pscurve[linecolor=blue,linewidth=0.8mm](5,4)(4.9,4.3)(5.1,5)(5.5,5.4)(5.1,5.6)
\rput(5.5,6){$\gamma_t$}
 \pscurve[linewidth=0.8mm,
linecolor=red]{->}(8,5)(9.5,5.5)(11,5)
\rput(9.5,5.9){$h(z,t)$}
\pscurve[linewidth=0.3mm,
linecolor=red](5.2,3.8)(8,2)(10,2)(12,2.5)
\psline[linecolor=red,linewidth=0.3mm]{->}(12,2.5)(13.5,3.8)
\psline[linecolor=red,linewidth=0.3mm]{->}(12,2.5)(16.5,3.8)
\rput(13.5,4.3){$h^-(0,t)$}
\rput(16.5,4.3){$h^+(0,t)$}
 \pscircle[fillstyle=solid,
fillcolor=black](13.5,4){.1}
 \pscircle[fillstyle=solid,
fillcolor=black](16.5,4){.1}
 \pscircle[fillstyle=solid,
fillcolor=black](5,4){.1}
\end{pspicture}}
\caption[]{(a) The mapping $h(z,t)$}
\end{figure}

Without loss of generality, assume that $\gamma(0)=0$, which implies  $\lambda(0)=0$. By the
symmetry principle $h(z,t)$ can be extended conformally to the map from $\mathbb
C\setminus(\gamma(t)\cup\overline{\gamma}(t)\cup0)$ onto $\mathbb C\setminus I(t)$. Moreover
$h(z,t)$ is analytic in $\mathbb C$ except for two points $z=\gamma(t)$ and
$z=\overline{\gamma}(t)$, while its inverse $h^{-1}(w,t)$ is analytic everywhere except for
$w=h^-(0,t)$ and $w=h^+(0,t)$. In a neighborhood of one of prime ends at $z=0$ the function
$w=h(z,t)$ is expanded in the series
\begin{equation}h(z,t)=h^+(0,t)+a_2(t)z^2+a_3(t)z^3+\dots,\;\;\;t>0,\;\;\;a_2(t)\neq0.
\label{EXP1}
\end{equation} Hence, near $w=h^+(0,t)$, \begin{equation}
h^{-1}(w,t)=b_1(w-h^+(0,t))^{1/2}+b_2(w-h^+(0,t))+\dots,\;\;\; b_1(t)=a_2(t)^{-1/2}.
\label{EXP2}
\end{equation} The expansions about the second prime end at $z=0$ for $h(z,t)$ and about
$h^-(0,t)$ for $h^{-1}(w,t)$ are analogous.

\section{Coefficient growth for slit maps}

Prokhorov and Vasil'ev \cite{ProkhVas} studied singular solutions to the L\"owner
differential equation \eqref{Loewner4} for slit maps $h(z,t)$ generated by the driving term
$\lambda$. In particular, if $\lambda\in\text{Lip}(1/2)$ with $\|\lambda\|_{1/2}=c$, and
$\gamma(t)$ is a quasisymmetric curve, then $$\lim_{t\to+0}\sup\frac{h^+(0,t)}{\sqrt
t}\leq\frac{c+\sqrt{c^2+16}}{2}.$$ Developing this motivation we will show  how the
L\"owner differential equation \eqref{Loewner4} leads to coefficient estimates for singular
solutions. 
\begin{theorem}\label{th2} Let the L\"owner differential equation \eqref{Loewner4} with the driving term
$\lambda\in\text{Lip}(1/2)$, generate slit maps $h(z,t): \mathbb H\setminus\gamma(t)\to\mathbb
H$ where $\gamma(t)\cup\overline{\gamma}(t)\cup\gamma(0)$ is an analytic curve which is mapped
onto $[h^-(0,t),h^+(0,t)]$. Suppose that $$\lim_{t\to+0}\frac{\lambda(t)}{\sqrt t}=c,\;\;\;
\lim_{t\to+0}\frac{h^+(0,t)}{\sqrt t}=b,\;\;\;c<b\leq\frac{c+\sqrt{c^2+16}}{2}.$$ Then,
for $h(z,t)$ expanded by (\ref{EXP1}) and for every $\varepsilon>0$, we have
$$\lim_{t\to+0}a_2(t)t^{\frac{2}{(b-c)^2}+\varepsilon}=0, \quad \mbox{and \ } \lim_{t\to+0}a_2(t)t^{\frac{2}{(b-c)^2}-\varepsilon}=\infty.$$
\end{theorem}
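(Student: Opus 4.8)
The plan is to reduce the statement to a single scalar ODE for $a_2(t)$ extracted from \eqref{Loewner4}, and then to read off the asymptotics from the prescribed rates of the singular solution $h^+(0,t)$ and of the driving term $\lambda(t)$. First I would use the analytic continuation discussed just before \eqref{EXP1}: for each fixed $t>0$ the reflection of $h$ across the analytic curve $\gamma(t)\cup\overline{\gamma}(t)\cup\gamma(0)$ makes $h(z,t)$ analytic in a full neighbourhood of $z=0$, where it is given by \eqref{EXP1} with $a_2(t)\neq 0$; by the Earle--Epstein regularity quoted in the previous section, $a_2(t)$ and $h^+(0,t)$ are real-analytic in $t$ on $(0,T]$. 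Since $2/(h-\lambda)$ is analytic in $h$, the identity \eqref{Loewner4} persists by analytic continuation in that whole neighbourhood of $z=0$.

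Writing $P(t):=h^+(0,t)-\lambda(t)$, I substitute \eqref{EXP1} into \eqref{Loewner4}, keeping $z$ fixed while differentiating in $t$, and expand the right-hand side in powers of $z$:
\begin{equation*}
\dot h^+(0,t)+\dot a_2(t)z^2+\dots=\frac{2}{P(t)+a_2(t)z^2+\dots}=\frac{2}{P(t)}\left(1-\frac{a_2(t)}{P(t)}z^2+\dots\right).
\end{equation*}
Matching the constant term reproduces the singular-solution relation $\dot h^+(0,t)=2/P(t)$, while matching the coefficient of $z^2$ gives the key separable equation
\begin{equation*}
\dot a_2(t)=-\frac{2\,a_2(t)}{P(t)^2}.
\end{equation*}
Here $h^+(0,t)$ and $\lambda(t)$ are real, so $P(t)$ is real; by the symmetry $h(\bar z,t)=\overline{h(z,t)}$ so is $a_2(t)$, whence the limits in the statement are genuine real quantities. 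The hypotheses $\lambda(t)/\sqrt t\to c$ and $h^+(0,t)/\sqrt t\to b$ with $b>c$ give $P(t)/\sqrt t\to b-c>0$, so $P(t)>0$ for small $t$.

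Integrating the separable equation from $t$ to a fixed small $t_1$ yields
\begin{equation*}
a_2(t)=a_2(t_1)\exp\left(\int_t^{t_1}\frac{2\,d\tau}{P(\tau)^2}\right).
\end{equation*}
Because $P(\tau)^2=(b-c)^2\tau\,(1+o(1))$ as $\tau\to+0$, squeezing $P(\tau)$ between $(b-c-\varepsilon')\sqrt\tau$ and $(b-c+\varepsilon')\sqrt\tau$ on a small interval and letting $\varepsilon'\to0$ gives $\int_t^{t_1}2\,P(\tau)^{-2}\,d\tau=\frac{2}{(b-c)^2}\ln\frac1t+o\!\left(\ln\frac1t\right)$, hence
\begin{equation*}
\frac{\ln a_2(t)}{\ln(1/t)}\longrightarrow\frac{2}{(b-c)^2},\qquad t\to+0.
\end{equation*}
Taking logarithms in $a_2(t)\,t^{2/(b-c)^2\pm\varepsilon}$ turns each claim into $\bigl(\pm\varepsilon+o(1)\bigr)\ln(1/t)$, which tends to $-\infty$ and $+\infty$ respectively, producing the two stated limits.

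The main obstacle is the rigorous justification of the term-by-term $t$-differentiation of \eqref{EXP1} together with the validity of \eqref{Loewner4} right up to the critical prime-end $z=0$; this rests on the joint regularity of $h(z,t)$ near $z=0$ for $t>0$ (analyticity in $z$ from the reflection, real-analyticity in $t$ from Earle--Epstein) and on the careful treatment of the singular integral at $\tau=0$, where the condition $b>c$ is exactly what keeps $P(\tau)\sim(b-c)\sqrt\tau$ bounded away from zero after rescaling and forces the logarithmic divergence with the correct coefficient $2/(b-c)^2$. As a sanity check, the symmetric vertical-slit map $h(z,t)=\sqrt{z^2+4t}$ has $\lambda\equiv0$ (so $c=0$), $h^+(0,t)=2\sqrt t$ (so $b=2$) and $a_2(t)=\tfrac14\,t^{-1/2}=\tfrac14\,t^{-2/(b-c)^2}$, in agreement with the claimed exponent.
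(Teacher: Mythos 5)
Your proposal is correct and follows essentially the same route as the paper: the paper derives the same key singular ODE $\dot a_2=-2a_2/(h^+(0,t)-\lambda(t))^2$ by differentiating \eqref{Loewner4} twice in $z$ and setting $z=0$ (equivalent to your power-series matching), then integrates it using $h^+(0,t)-\lambda(t)=\sqrt{t}\,((b-c)+o(1))$ to obtain two-sided bounds $Bt^{-2/(b-c+\varepsilon)^2}\leq|a_2(t)|\leq Bt^{-2/(b-c-\varepsilon)^2}$, which yield the stated limits just as your logarithmic-ratio argument does. The only part of the paper's proof you omit is its verification that the bound $b\leq(c+\sqrt{c^2+16})/2$ persists when $c$ is defined as $\lim_{t\to+0}\lambda(t)/\sqrt{t}$ rather than as $\|\lambda\|_{1/2}$, but since that inequality is listed among the hypotheses of the theorem, your proof of the asymptotic conclusions does not require it.
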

\begin{proof} The inequality $$b\leq\frac{c+\sqrt{c^2+16}}{2}$$ was proved in  \cite{ProkhVas} for $c=\|\lambda\|_{1/2}$. Let us show that
this inequality remains valid for $c=\lim_{t\to +0}\frac{\lambda(t)}{\sqrt{t}}$.

 Indeed, the function
$\varphi(t):=h^+(0,t)/\sqrt t$ solves the differential equation $$t\varphi'(t)=\frac{2\sqrt
t}{\sqrt t\varphi(t)-\lambda(t)}-\frac{\varphi(t)}{2}.$$ Taking into account that
$\lambda(t)<\sqrt t\varphi(t)$, $t>0$, we note that $\varphi'(t)>0$ only when
$$\frac{\lambda(t)}{\sqrt t}<\varphi(t)<\varphi_1(t):=\frac{\lambda(t)}{2\sqrt
t}+\sqrt{\frac{\lambda^2(t)}{4t}+4}.$$ For every $\varepsilon>0$, the function $\varphi(t)$ does
not exceed $A(\varepsilon):=(c+\varepsilon+\sqrt{(c+\varepsilon)^2+16})/2$ in an interval $0<t<\delta(\varepsilon)$.
Otherwise $\varphi'(t^*)<0$ for some $t$,  $0<t^*<\delta(\varepsilon)$, and $\varphi(t)$ increases
as $t$ runs from $t^*$ to +0. This leads to the differential inequality
$$\frac{dh^+(0,t)}{dt}<\frac{2}{\sqrt t(A(\varepsilon)-c-\varepsilon)},\;\;\;0<t<t^*,$$ and after
integrating contradicts the theorem conditions.

The extended map $h(z,t)$ satisfies equation (\ref{Loewner4}) and its derivative $h'(z,t)$ with
respect to $z$ vanishes at $z=0$. So $w=h(z,t)$ is expanded in the series by (\ref{EXP1}) in a
neighborhood of $z=0$. Let us differentiate (\ref{Loewner4}) with respect to $z$ and let us obtain the following
differential equation $$\frac{dh'}{dt}=\frac{-2h'}{(h-\lambda(t))^2}.$$ Differentiating this
equation again we obtain
$$\frac{dh''}{dt}=-2\frac{h''(h-\lambda(t))-2h'^2}{(h-\lambda(t))^3}.$$ Putting $z=0$, we come to the
singular differential equation $$\frac{da_2}{dt}=\frac{-2a_2}{(h(0,t)-\lambda(t))^2},$$ which
gives that
\begin{equation}\frac{1}{a_2}\frac{da_2}{dt}=\frac{-2}{t((b-c)+o(1))^2},\;\;\;t\to+0.
\label{AS}
\end{equation} Integrating this asymptotic differential equation in $(0,\delta)$ one arrives at the
estimates $$ Bt^{-2/(b-c+\varepsilon)^2} \leq|a_2(t)|\leq Bt^{-2/(b-c-\varepsilon)^2},$$
 $0<t<\delta(\varepsilon)$, with a certain
$B=B(\varepsilon)$. This completes the proof.
\end{proof}

Theorem \ref{th2} establishes also the growth of the first coefficient for the inverse function
because of the connection between the coefficients $a_2(t)$ in (\ref{EXP1}) and $b_1(t)$ in
(\ref{EXP2}).
\begin{corollary}\label{cor2} Under the conditions of Theorem  \ref{th2}, for every $\varepsilon>0$ we have
$$\lim_{t\to+0}b_1(t)t^{\frac{-1}{(b-c)^2}+\varepsilon}=0\quad \mbox{and \ } \lim_{t\to+0}b_1(t)t^{\frac{-1}{(b-c)^2}-\varepsilon}=\infty.$$
\end{corollary}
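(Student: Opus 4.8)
The plan is to establish the corollary as a direct consequence of Theorem~\ref{th2} by exploiting the algebraic relation $b_1(t)=a_2(t)^{-1/2}$ stated in the expansion \eqref{EXP2}. The entire content of Corollary~\ref{cor2} is a translation of the growth rate for $a_2(t)$ into a growth rate for $b_1(t)$, so the proof should be short.

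First I would recall from \eqref{EXP2} that the first coefficient of the inverse map satisfies $b_1(t)=a_2(t)^{-1/2}$, so that $|b_1(t)|=|a_2(t)|^{-1/2}$. From Theorem~\ref{th2} we have the two-sided asymptotic bound $Bt^{-2/(b-c+\varepsilon)^2}\leq |a_2(t)|\leq Bt^{-2/(b-c-\varepsilon)^2}$ on some interval $0<t<\delta(\varepsilon)$. Raising this chain of inequalities to the power $-1/2$ reverses the inequalities and yields
\[
B^{-1/2}t^{1/(b-c-\varepsilon)^2}\leq |b_1(t)|\leq B^{-1/2}t^{1/(b-c+\varepsilon)^2},
\]
valid on the same interval. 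The two limit statements in the corollary then follow from the fact that the limiting exponents $\mp 1/(b-c)^2$ separate the true growth order, in exactly the same way that the limit statements for $a_2(t)$ in Theorem~\ref{th2} follow from its two-sided bound.

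The one genuinely quantitative point to check is that, for an arbitrary prescribed $\varepsilon>0$ appearing in the corollary's conclusion, one can select a corresponding auxiliary $\varepsilon'>0$ in the bound above so that $1/(b-c+\varepsilon')^2\geq -1/(b-c)^2+\varepsilon$ is false in the limit-defeating direction; concretely, since the map $\eta\mapsto 1/(b-c+\eta)^2$ is continuous at $\eta=0$ with value $1/(b-c)^2$, choosing $\varepsilon'$ small enough makes $1/(b-c+\varepsilon')^2$ as close to $1/(b-c)^2$ as desired, which forces $b_1(t)t^{-1/(b-c)^2-\varepsilon}\to\infty$ via the lower bound and $b_1(t)t^{-1/(b-c)^2+\varepsilon}\to 0$ via the upper bound. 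This matching of $\varepsilon$'s is the only step requiring care, and it is entirely routine given the continuity of the exponent in the perturbation parameter. Thus the corollary is immediate once the exponent $-1/(b-c)^2$ is identified as minus half the exponent $2/(b-c)^2$ governing $a_2$.
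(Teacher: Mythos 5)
Your proposal is correct and takes essentially the same route as the paper, whose entire ``proof'' of Corollary~\ref{cor2} is the remark that it follows from Theorem~\ref{th2} via the relation $b_1(t)=a_2(t)^{-1/2}$ in \eqref{EXP2}. You have merely made explicit the inversion of the two-sided bound $Bt^{-2/(b-c+\varepsilon)^2}\leq|a_2(t)|\leq Bt^{-2/(b-c-\varepsilon)^2}$ from the proof of Theorem~\ref{th2} and the routine $\varepsilon$-matching by continuity of the exponent, both of which the paper leaves implicit.
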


Equation (\ref{Loewner4}) provides a chance to estimate the growth of coefficients $a_n$ in the
series (\ref{EXP1}). To this purpose we rewrite (\ref{Loewner4}) as $$\frac{dh(z,t)}{dt}=
\frac{2}{h(z,t)-\lambda(t)}= \frac{2}{h^+(0,t)-\lambda(t)}
\frac{1}{\frac{h(z,t)-h^+(0,t)}{h^+(0,t)-\lambda(t)}+1}$$ $$=\frac{2}{h^+(0,t)-\lambda(t)}
\sum_{k=0}^{\infty}(-1)^k\left(\frac{h(z,t)-h^+(0,t)}{h^+(0,t)-\lambda(t)}\right)^k$$
$$=\sum_{k=0}^{\infty}\frac{2(-1)^k}{(h^+(0,t)-\lambda(t))^{k+1}}
\left(\sum_{n=2}^{\infty}a_n(t)z^n\right)^k.$$ Equating coefficients at $z^n$ in the both sides
of this equation we obtain recurrent singular linear differential equations for $a_n(t)$.
Under the conditions of Theorem 3, we observe that there exists $\alpha<0$, such that for all
$n\geq2$, $$a_n(t)=O(t^{\alpha n}),\;\;\;t\to+0.$$

\section{Coefficient growth for the inverse function}

We have to study also the coefficient growth for the inverse function $h^{-1}(w,t)$ expanded
by (\ref{EXP2}).
\begin{theorem} Let the L\"owner differential equation (\ref{Loewner4}) with the driving term
$\lambda\in\text{Lip}(1/2)$, generate slit maps $h(z,t): \mathbb H\setminus\gamma(t)\to\mathbb
H$, where $\gamma(t)\cup\overline{\gamma}(t)\cup\gamma(0)$ is an analytic curve which is mapped
onto $[h^-(0,t),h^+(0,t)]$. Suppose that $$\lim_{t\to+0}\frac{\lambda(t)}{\sqrt t}=c,\;\;\;
\lim_{t\to+0}\frac{h^+(0,t)}{\sqrt t}=b,\;\;\;c<b\leq\frac{c+\sqrt{c^2+16}}{2}.$$ Given
$\varepsilon>0$, the coefficients $b_n(t)$ in the expansion (\ref{EXP2}) for $g^{-1}(w,t)$ and
for odd $n>1$, satisfy the inequality $$|b_n(t)|\leq
A_nt^{\frac{1}{(b-c)^2}-\frac{n-1}{4}-\varepsilon},\;\;\;0<t<\delta,$$ with $A_n$ depending only on
$n$ and with $\delta$ depending on $\varepsilon$.
\end{theorem}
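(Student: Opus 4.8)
The plan is to convert the expansion \eqref{EXP2} of the inverse map into a system of singular ordinary differential equations for the coefficients $b_n(t)$, exactly parallel to the derivation of \eqref{AS} for $a_2(t)$ in the proof of Theorem \ref{th2}, and then to solve this system recursively by induction on the odd index $n$.

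First I would write the L\"owner equation for the inverse function. Differentiating the identity $h(h^{-1}(w,t),t)=w$ in $t$ and using \eqref{Loewner4} together with $h_z\cdot(h^{-1})_w=1$, one obtains for $g(w,t):=h^{-1}(w,t)$ the equation
\begin{equation}
\frac{\partial g}{\partial t}=-\frac{2}{w-\lambda(t)}\,\frac{\partial g}{\partial w}.
\label{INV}
\end{equation}
Set $W=w-h^+(0,t)$ and $D(t)=h^+(0,t)-\lambda(t)$, so that $w-\lambda=W+D$. The singular solution $h^+(0,t)$ itself satisfies $\dot{h}^+=2/D$, the same relation already used for $\varphi(t)$ in the proof of Theorem \ref{th2}. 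Substituting $g=\sum_{n\geq1}b_n(t)W^{n/2}$ into \eqref{INV}, multiplying by $(W+D)$ and invoking $\dot{h}^+=2/D$, the terms carrying $\dot{h}^+D=2$ cancel against the right-hand side, and after matching the coefficient of $W^{m/2}$ I expect the clean recurrence
\begin{equation}
\dot{b}_m-\frac{m}{D^2}\,b_m=-\frac{\dot{b}_{m-2}}{D},\qquad m\geq1,
\label{REC}
\end{equation}
with the convention $b_{-1}=b_0=0$. For $m=1$ this is the homogeneous equation whose solution reproduces Corollary \ref{cor2}, providing the base of the induction.

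Next I would feed in the asymptotics. Since $h^+(0,t)/\sqrt t\to b$ and $\lambda(t)/\sqrt t\to c$, we have $D^2=(b-c)^2t(1+o(1))$, and hence $m/D^2=\tfrac{m}{(b-c)^2t}(1+o(1))$ as $t\to+0$; this is precisely the place where the $\varepsilon$ in the statement originates, and it is controlled exactly as in Theorem \ref{th2} by bounding $D^2$ between $(b-c\mp\varepsilon)^2t$ on a small interval $(0,\delta)$. Equation \eqref{REC} is then a first order linear ODE with integrating factor $\mu(t)=\exp\!\left(-\int^t\frac{m}{D^2(s)}\,ds\right)$, which behaves like $t^{-m/(b-c)^2}$ up to the same $\varepsilon$-slack in the exponent. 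Integrating $(\mu b_m)'=-\mu\,\dot{b}_{m-2}/D$ and dividing by $\mu$ yields the bound on $b_m$; tracking the powers shows that each step from $m-2$ to $m$ lowers the exponent by $1/2$, reproducing the target exponent $\frac{1}{(b-c)^2}-\frac{n-1}{4}$ for $n=m$.

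The step I expect to be the main obstacle is that integrating \eqref{REC} requires a bound on the source $\dot{b}_{m-2}/D$, i.e.\ on the derivative $\dot{b}_{m-2}$, whereas the induction hypothesis only supplies a bound on $b_{m-2}$ itself, and one cannot differentiate an asymptotic inequality. The remedy is to use the recurrence \eqref{REC} backwards: it expresses $\dot{b}_{m-2}$ algebraically as $\frac{m-2}{D^2}b_{m-2}-\frac1D\dot{b}_{m-4}$, and iterating reduces $\dot{b}_{m-2}$ to a finite combination of the lower coefficients $b_{m-2},b_{m-4},\dots,b_1$ multiplied by powers of $1/D$. A direct count of exponents shows that all these contributions are of the same order $t^{1/(b-c)^2-(m+1)/4}$, so the source $\dot{b}_{m-2}/D$ has order $t^{1/(b-c)^2-(m+3)/4}$; after the integration described above this produces exactly $b_m=O(t^{1/(b-c)^2-(m-1)/4})$. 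The remaining bookkeeping is to check that the finitely many $\varepsilon$-losses accumulated along the chain $b_1\to b_3\to\dots\to b_n$ are absorbed by starting from a sufficiently small $\varepsilon$, which closes the induction and gives the stated estimate for odd $n>1$.
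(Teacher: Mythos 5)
Your proposal is correct and follows essentially the same route as the paper: the paper expands $2/(w-\lambda(t))$ in a geometric series about $h^+(0,t)$ and obtains the fully unrolled recurrence \eqref{COE}, $\dot b_n=\sum_{j\geq 0}(-1)^j(n-2j)b_{n-2j}/(h^+(0,t)-\lambda(t))^{j+2}$, which is exactly what your two-term recurrence \eqref{REC} becomes after the ``backwards'' substitution you describe to eliminate $\dot b_{m-2}$, so your main perceived obstacle is resolved precisely as in the paper. The remaining steps --- induction over the odd indices starting from $b_1$ (Corollary \ref{cor2}), the integrating factor behaving like $t^{\mp n/(b-c)^2}$ with the $\varepsilon$-slack coming from $(h^+(0,t)-\lambda(t))^2=(b-c)^2t(1+o(1))$, and the drop of the exponent by $1/2$ at each step --- coincide with the paper's estimates \eqref{BN1}--\eqref{BN2}.
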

\begin{proof}
The function $h^{-1}(w,t)$ solves the differential equation \begin{equation}
\frac{dh^{-1}(w,t)}{dt}=-(h^{-1}(w,t))'\frac{2}{w-\lambda(t)},
\end{equation}
where $(h^{-1}(w,t))'$ denotes the derivative of $h^{-1}(w,t)$ with respect to $w$. Expanding the
right-hand side in the series near $w=h^+(0,t)$ we obtain $$\frac{dh^{-1}(w,t)}{dt}=
\frac{-2(h^{-1}(w,t))'}{h^+(0,t)-\lambda(t)}
\frac{1}{\frac{w-h^+(0,t)}{h^+(0,t)-\lambda(t)}+1}=$$
$$\frac{-2(h^{-1}(w,t))'}{h^+(0,t)-\lambda(t)}
\sum_{k=0}^{\infty}(-1)^k\left(\frac{w-h^+(0,t)}{h^+(0,t)-\lambda(t)}\right)^k.$$ Let us substitute
here the expansion (\ref{EXP2}) which converges for $|w-h^+(0,t)|<h^{+}(0,t)-h^-(0,t)$ and diverges
for $|w-h^+(0,t)|>h^{+}(0,t)-h^-(0,t)$. We  rewrite the latter differential equation as \begin{equation}
\frac{d}{dt}\sum_{n=1}^{\infty}b_n(t)(w-h^+(0,t))^{n/2}= \label{CON} \end{equation}
$$\sum_{n=1}^{\infty}nb_n(t)(w-h^+(0,t))^{n/2-1}
\sum_{k=0}^{\infty}\frac{(-1)^{k-1}(w-h^+(0,t))^k}{(h^+(0,t)-\lambda(t))^{k+1}}.$$ Equating
coefficients at the same powers in the both sides of (\ref{CON}) one obtains recurrent singular
linear differential equations for $b_n(t)$. We start with positive powers because powers
(-1/2) and 0 produce trivial equations. The equation \begin{equation} \frac{db_n(t)}{dt}=
\sum_{j=0}^{[\frac{n-1}{2}]}\frac{(-1)^j(n-2j)b_{n-2j}}{(h^+(0,t)-\lambda(t))^{j+2}}
\label{COE}
\end{equation}
holds, where $[a]$ is the integer part of $a\geq0$. Note that, for every $n\geq1$, equation
(\ref{COE}) contains only coefficients with either even or odd indices.

Let us show that $|b_n(t)|\leq A_nt^{1/(b-c)^2-(n-1)/4-\varepsilon}$ for every $\varepsilon>0$, for odd $n>1$, 
and for $A_n$ depending on $n$. Given $\varepsilon>0$, the solution $b_n(t)$ to equation (\ref{COE})
satisfies the inequality
\begin{equation}|b_n(t)|\leq C_n' t^{\frac{n}{(b-c)^2}-n\varepsilon}
\left(\int^tt^{\frac{-n}{(b-c)^2}} \sum_{j=1}^{[\frac{n-1}{2}]}
\frac{(n-2j)|b_{n-2j}|t^{-\frac{j+2}{2}}}{(b-c-\varepsilon)^{j+2}}dt\right),\;\;\; 0<t<\delta.
\label{BN1}
\end{equation} This inequality proves the assertion of Theorem  for $n=3$. Suppose that the
assertion is true for $n=3,5,\dots,n-2$. Then, for $0<t<\delta$, \begin{equation} |b_n(t)|\leq
C_nt^{\frac{n}{(b-c)^2}-n\varepsilon}\int^tt^{\frac{-n}{(b-c)^2}}
\sum_{j=1}^{[\frac{n-1}{2}]}t^{\frac{1}{(b-c)^2}-\frac{n+3}{4}}dt\leq
A_nt^{\frac{1}{(b-c)^2}-\frac{n-1}{4}-\varepsilon}, \label{BN2} \end{equation} which proves the
induction conjecture and completes the proof.
\end{proof}

The equation (\ref{COE}) for $b_1(t)$ corresponds to the similar equation in Theorem \ref{th2} for
$a_2(t)$ and Corollary \ref{cor2}.

\begin{corollary} Under the conditions of Theorem \ref{th2}, given $\varepsilon>0$, we have
$$\lim_{t\to+0}b_n(t)t^{-\frac{1}{(b-c)^2}+\frac{n-1}{4}+\varepsilon}=0.$$
\end{corollary}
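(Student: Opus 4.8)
The plan is to read the corollary off directly from the estimate just established in the preceding theorem, exploiting the fact that that estimate is available with an arbitrarily small loss in the exponent. Recall that the theorem furnishes, for each odd $n>1$ and for \emph{every} $\eta>0$, a constant $A_n$ and a threshold $\delta=\delta(\eta)$ such that
$$
|b_n(t)|\leq A_n\,t^{\frac{1}{(b-c)^2}-\frac{n-1}{4}-\eta},\qquad 0<t<\delta .
$$
The point is that the power of $t$ on the right can be pushed arbitrarily close to $\frac{1}{(b-c)^2}-\frac{n-1}{4}$ from below, so that multiplying by a fixed weight whose exponent sits strictly above $-\bigl(\frac{1}{(b-c)^2}-\frac{n-1}{4}\bigr)$ leaves a strictly positive residual power of $t$.

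First I would fix the $\varepsilon>0$ appearing in the corollary and apply the preceding theorem with the smaller parameter $\eta=\varepsilon/2$. This produces the bound $|b_n(t)|\leq A_n t^{\frac{1}{(b-c)^2}-\frac{n-1}{4}-\varepsilon/2}$ on the corresponding interval $0<t<\delta(\varepsilon/2)$. Next I would multiply through by the positive quantity $t^{-\frac{1}{(b-c)^2}+\frac{n-1}{4}+\varepsilon}$, which is exactly the weight appearing in the corollary. The exponents of $t$ then collapse to
$$
\Bigl(\tfrac{1}{(b-c)^2}-\tfrac{n-1}{4}-\tfrac{\varepsilon}{2}\Bigr)+\Bigl(-\tfrac{1}{(b-c)^2}+\tfrac{n-1}{4}+\varepsilon\Bigr)=\tfrac{\varepsilon}{2},
$$
so that $\bigl|b_n(t)\,t^{-\frac{1}{(b-c)^2}+\frac{n-1}{4}+\varepsilon}\bigr|\leq A_n\,t^{\varepsilon/2}$ for $0<t<\delta(\varepsilon/2)$. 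Since $\varepsilon/2>0$, the right-hand side tends to $0$ as $t\to+0$, and the desired limit follows.

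There is essentially no genuine obstacle here; the only point requiring a little care is the bookkeeping of the two error terms, namely choosing the loss in the theorem ($\varepsilon/2$) strictly smaller than the offset ($\varepsilon$) in the weight exponent of the corollary, so that a \emph{strictly} positive residual power $t^{\varepsilon/2}$ survives. One should also note that $b-c>0$ by hypothesis, so that all the exponents are well defined, and that the statement is understood for the range of indices covered by the theorem (odd $n>1$); for $n=1$ it coincides with the first limit in Corollary \ref{cor2}, which provides a consistency check.
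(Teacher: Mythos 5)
Your proposal is correct and matches the paper's (implicit) argument: the paper states this corollary without a separate proof precisely because it follows immediately from the preceding theorem's bound $|b_n(t)|\leq A_n t^{\frac{1}{(b-c)^2}-\frac{n-1}{4}-\eta}$ by choosing the loss $\eta$ strictly smaller than the weight offset $\varepsilon$, exactly as you do with $\eta=\varepsilon/2$. Your additional remarks --- that the statement concerns the odd $n>1$ covered by the theorem, and that $n=1$ is consistent with Corollary \ref{cor2} --- also agree with the paper, which handles even $n$ separately in the remark that follows.
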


A similar statement for even $n$ is true with slightly changed powers since asymptotic
behavior of $b_2(t)$ is equal to that of $b_1^2(t)$.

\section{Singularity of the slit parametrization}

Let us examine the type of singularity  of the parametrization $\gamma=\gamma(t)$. Assume in this section that $c\geq 0$,
otherwise we apply all reasonings to $h^-(0,t)$ instead of $h^+(0,t)$.

\begin{lemma}\label{lemma1}  Let the L\"owner differential equation (\ref{Loewner4}) with the driving term
$\lambda\in\text{Lip}(1/2)$, generate slit maps $h(z,t): \mathbb H\setminus\gamma(t)\to\mathbb
H$, where $\gamma(t)\cup\overline{\gamma}(t)\cup\gamma(0)$ is an analytic curve which is mapped
onto $[h^-(0,t),h^+(0,t)]$. Suppose that $$\lim_{t\to+0}\frac{\lambda(t)}{\sqrt t}=c\geq 0,\;\;\;
\lim_{t\to+0}\frac{h^+(0,t)}{\sqrt t}=b,\;\;\;c<b\leq\frac{c+\sqrt{c^2+16}}{2}.$$ Then,
given $\varepsilon>0$, we have $$\lim_{t\to+0}\gamma(t)t^{-\frac{1}{(b-c)^2}-\frac{1}{4}+\varepsilon}=0.$$
\end{lemma}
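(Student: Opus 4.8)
The plan is to realize the tip $\gamma(t)$ as the $h^{-1}$–image of the driving point and then extract its size directly from \eqref{EXP2}. For the chordal equation \eqref{Loewner4} the growing tip is carried to the driving term, $h(\gamma(t),t)=\lambda(t)$, so that
$$\gamma(t)=h^{-1}(\lambda(t),t)=\sum_{n\geq1}b_n(t)\bigl(\lambda(t)-h^+(0,t)\bigr)^{n/2},$$
the right-hand side being \eqref{EXP2} evaluated at $w=\lambda(t)$, with the branch of $(w-h^+(0,t))^{1/2}$ chosen by approaching the segment $I(t)$ from $\mathbb H$ so that $\gamma(t)\in\mathbb H$; the $n=0$ term is absent because $h^{-1}(h^+(0,t),t)=\gamma(0)=0$. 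Since $\lambda(t)\in(h^-(0,t),h^+(0,t))$, the point $w=\lambda(t)$ lies inside the disk of convergence $|w-h^+(0,t)|<h^+(0,t)-h^-(0,t)$ of \eqref{EXP2}, so the series is legitimate.

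First I would record the asymptotics of the argument: from $\lambda(t)/\sqrt t\to c$ and $h^+(0,t)/\sqrt t\to b$ we get $\lambda(t)-h^+(0,t)=(c-b)\sqrt t\,(1+o(1))$, whence $\bigl|\lambda(t)-h^+(0,t)\bigr|^{n/2}=(b-c)^{n/2}t^{n/4}(1+o(1))$. Next I would feed in the coefficient bounds obtained in the preceding two sections. For $n=1$, Corollary \ref{cor2} gives $|b_1(t)|\le C\,t^{1/(b-c)^2-\varepsilon'}$; for odd $n\geq3$, the Theorem of the preceding section gives $|b_n(t)|\le A_n\,t^{1/(b-c)^2-(n-1)/4-\varepsilon'}$. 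Multiplying, every odd-$n$ term satisfies, for $t$ small,
$$|b_n(t)|\,\bigl|\lambda(t)-h^+(0,t)\bigr|^{n/2}\le 2A_n(b-c)^{n/2}\,t^{\frac{1}{(b-c)^2}-\frac{n-1}{4}+\frac{n}{4}-\varepsilon'}=2A_n(b-c)^{n/2}\,t^{\frac{1}{(b-c)^2}+\frac14-\varepsilon'},$$
so the exponent of $t$ is the same for all odd $n$. For even $n$, the remark that $b_2$ behaves like $b_1^2$ (and its even analogue) gives $|b_n(t)|\le A_n'\,t^{2/(b-c)^2-(n-2)/4-\varepsilon'}$, and the matching terms carry the exponent $2/(b-c)^2+\tfrac12-\varepsilon'$, which is strictly larger than $1/(b-c)^2+\tfrac14-\varepsilon'$ because $1/(b-c)^2+\tfrac14>0$; hence the even terms are negligible relative to the odd ones.

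Summing the two families I would obtain $|\gamma(t)|\le C\,t^{1/(b-c)^2+1/4-\varepsilon'}$ for $0<t<\delta$. Given the $\varepsilon$ of the statement, choosing $\varepsilon'<\varepsilon$ makes $\gamma(t)\,t^{-1/(b-c)^2-1/4+\varepsilon}$ bounded by $C\,t^{\varepsilon-\varepsilon'}$, which tends to $0$ as $t\to+0$; this is exactly the assertion. The main obstacle is the step from the term-by-term bounds to a bound on the whole sum: since each odd term contributes the identical power of $t$, one must show that $\sum_nA_n(b-c)^{n/2}$ converges, i.e., that the $n$–dependent constants from the coefficient theorem remain summable after the weight $(b-c)^{n/2}$. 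I expect this to follow from the fact that $\lambda(t)$ stays inside the disk of convergence of \eqref{EXP2} with ratio bounded away from $1$ as $t\to+0$ — by the symmetry normalization $h^+(0,t)-h^-(0,t)$ and $h^+(0,t)-\lambda(t)$ are both comparable to $\sqrt t$, with quotient tending to a limit in $(0,1)$ — so that the tail decays geometrically and uniformly in $t$. Making this uniformity precise, together with a careful justification of the tip–driving-point identity and of the branch choice in \eqref{EXP2}, is the delicate point.
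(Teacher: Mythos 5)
Your proposal follows essentially the same route as the paper: writing $\gamma(t)=h^{-1}(\lambda(t),t)$, expanding via \eqref{EXP2}, inserting the coefficient bounds from Corollary \ref{cor2} and the theorem on the $b_n(t)$ (with the even-index terms handled by the $b_2\sim b_1^2$ remark), and controlling the sum by the fact that $\lambda(t)$ stays inside the disk of convergence with ratio bounded away from $1$. The ``delicate point'' you flag --- uniformity of the geometric tail in $t$ --- is exactly the step the paper also settles by the same observation $|\lambda(t)-h^+(0,t)|\leq k\,(h^+(0,t)-h^-(0,t))$, $k<1$, so your argument is correct and matches the paper's proof.
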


\begin{proof} We write
$$\gamma(t)= h^{-1}(\lambda(t),t)=\sum_{n=1}^{\infty}b_n(t)(\lambda(t)-h^+(0,t))^{n/2},$$ or
\begin{equation}\gamma(t)t^{-\frac{1}{(b-c)^2}-\frac{1}{4}+\varepsilon}=
\sum_{n=1}^{\infty}b_n(t)t^{-\frac{1}{(b-c)^2}+\frac{n-1}{4}+\varepsilon}
\left(\frac{\lambda(t)-h^+(0,t)}{\sqrt t}\right)^{\frac{n}{2}}. \label{PA1}
\end{equation} 
The series \eqref{EXP2} converges for $|w-h^+(0,t)|<h^+(0,t)-h^-(0,t)$. Since $|\lambda(t)-h^+(0,t)|\leq k(h^+(0,t)-h^-(0,t))$, $k<1$,
the series in the right hand side of \eqref{PA1} converges uniformly. So we can take  the limit under the summation symbol.
According to Corollary 2, $b_n(t)t^{-1/(b-c)^2+(n-1)/4+\varepsilon}\to0$ as
$t\to0$ for every $\varepsilon>0$. Therefore, given $\varepsilon>0$, we obtain
$\gamma(t)t^{-1/(b-c)^2-1/4+\varepsilon}\to0$ as $t\to0$, which completes the proof.
\end{proof}

Let us discuss now the posed question on different parametrizations of the slit $\gamma$. Namely, we
assume that $\gamma$ is an analytic curve together with its symmetric reflection and with the tip
at the origin. This means that the function $\gamma(s)$ is analytic in $[0,S]$ where $s$ is
the length parameter. Another function $\gamma(t)$ is analytic in $(0,T]$. We will study the
singularity type of $s=s(t)$ at $t=s=0$.

\begin{lemma}\label{lemma2} Let the L\"owner differential equation (\ref{Loewner4}) with the driving term
$\lambda\in\text{Lip}(1/2)$, generate slit maps $h(z,t): \mathbb H\setminus\gamma(t)\to\mathbb
H$, where $\gamma(t)\cup\overline{\gamma}(t)\cup\gamma(0)$ is an analytic curve which is mapped
onto $[h^-(0,t),h^+(0,t)]$. Suppose that $$\lim_{t\to+0}\frac{\lambda(t)}{\sqrt t}=c\geq 0,\;\;\;
\lim_{t\to+0}\frac{h^+(0,t)}{\sqrt t}=b,\;\;\;c<b\leq\frac{c+\sqrt{c^2+16}}{2}.$$ Then,
given $\varepsilon>0$, we have $$\lim_{t\to+0}s(t)t^{-\frac{1}{(b-c)^2}-\frac{1}{4}+\varepsilon}=0.$$
\end{lemma}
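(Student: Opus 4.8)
The plan is to exploit the fact that the two parametrizations $\gamma(s)$ and $\gamma(t)$ describe one and the same point of the slit, so that $\gamma(t)=\gamma(s(t))$ as points of $\mathbb C$, where on the left $\gamma$ is the time parametrization of the tip and on the right it is the length parametrization. Since $\gamma(0)=0$, Lemma \ref{lemma1} already controls the Euclidean distance from the tip to the origin, namely $\gamma(t)\,t^{-1/(b-c)^2-1/4+\varepsilon}\to0$. The lemma therefore reduces to comparing the arc-length $s(t)$ with this distance $|\gamma(t)|=|\gamma(s(t))|$ near the tip, and then transporting the estimate of Lemma \ref{lemma1} across the comparison.

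First I would record that, because $s$ is the length parameter of an analytic arc with $\gamma(0)=0$, the velocity satisfies $|\gamma'(s)|\equiv1$ on $[0,S]$; in particular $\gamma$ is regular at the tip and $\gamma(s)=\gamma'(0)s+O(s^2)$ with $|\gamma'(0)|=1$. The chord-versus-arc inequality yields the upper bound $|\gamma(s)|\le s$, while the Taylor expansion yields the matching lower bound $|\gamma(s)|\ge s/2$ for all sufficiently small $s$. Hence there is an $s_0>0$ with
\[
\frac12\,s\le|\gamma(s)|\le s,\qquad 0\le s\le s_0,
\]
so that $s(t)\asymp|\gamma(t)|$ as $t\to+0$; concretely $|\gamma(t)|\le s(t)\le2|\gamma(t)|$ for small $t$.

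It then suffices to multiply this two-sided bound by the positive factor $t^{-1/(b-c)^2-1/4+\varepsilon}$ and to invoke Lemma \ref{lemma1}: the majorant $2|\gamma(t)|\,t^{-1/(b-c)^2-1/4+\varepsilon}$ tends to $0$, so by squeezing $s(t)\,t^{-1/(b-c)^2-1/4+\varepsilon}\to0$, which is the assertion of the lemma.

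The only point requiring care is the regularity of the length parametrization at the tip, that is, the fact $\gamma'(0)\ne0$ needed for the lower bound $|\gamma(s)|\ge s/2$. This is exactly where the standing hypothesis that $\gamma$ together with its reflection and the point $\gamma(0)$ forms an \emph{analytic} curve through the origin is used: analyticity up to and including the tip guarantees that the unit-speed parametrization does not degenerate at $s=0$, so that $|\gamma'(0)|=1$ and the comparison $s(t)\asymp|\gamma(t)|$ is legitimate. Everything else is a direct transfer of the estimate already established in Lemma \ref{lemma1}.
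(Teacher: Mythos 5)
Your proof is correct, but it takes a genuinely different route from the paper's. The paper proves Lemma~\ref{lemma2} with the same series machinery as Lemma~\ref{lemma1}: it writes the length as the integral $s(t)=\int_{\lambda(t)}^{h^+(0,t)}\bigl|(h^{-1}(w,t))'\bigr|\,dw$, differentiates the expansion \eqref{EXP2} term by term, bounds $s(t)\leq\sum_{n\geq1}|b_n(t)|\,\bigl(h^+(0,t)-\lambda(t)\bigr)^{n/2}$, and then multiplies by $t^{-1/(b-c)^2-1/4+\varepsilon}$ and applies the coefficient estimates for $b_n(t)$ from Section~6 together with $(h^+(0,t)-\lambda(t))/\sqrt t\to b-c$, arriving at inequality \eqref{PA2}. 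You instead reduce Lemma~\ref{lemma2} to Lemma~\ref{lemma1} by a chord--arc comparison at the base point: since $\gamma(s)$ is analytic on $[0,S]$, unit-speed, and $\gamma(0)=0$, you get $|\gamma(s)|\geq s/2$ for small $s$, hence $s(t)\leq 2|\gamma(t)|$, and Lemma~\ref{lemma1} finishes the argument. Your reduction is shorter, bypasses the termwise integration and the uniform-convergence justification on the segment $[\lambda(t),h^+(0,t)]$ that the paper's integral estimate requires, and makes transparent that Lemmas~\ref{lemma1} and \ref{lemma2} carry the same content once the slit is nondegenerate at its base point --- consistent with Theorem~\ref{th3}, where $s(t)=A\sqrt t+o(\sqrt t)$ indeed matches the size of $|\gamma(t)|$. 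The paper's route, on the other hand, uses only the validity of the expansion \eqref{EXP2} and never the nondegeneracy $\gamma'(0)\neq0$ of the arc-length parametrization, so it is the more robust argument if the regularity hypothesis at the tip were weakened, and it keeps the two lemmas structurally parallel. Two points you should make explicit to close your write-up: (i) $s(t)\to0$ as $t\to+0$, so that for small $t$ you are in the range where $|\gamma(s)|\geq s/2$ applies; this follows from the continuity of the correspondence $t\leftrightarrow s$ quoted from Earle and Epstein in Section~4; (ii) as you note, $C^1$ regularity of the unit-speed parametrization at $s=0$ already suffices for the lower bound, so the analyticity hypothesis is more than you actually need.
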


\begin{proof} The function $h^{-1}(w,t)$ is a one-to-one map of  the segment $[\lambda(t),h^+(0,t)]$ onto
$\gamma=\gamma(t)$. The length $s=s(t)$ of $\gamma(t)$ equals
$$s(t)=\int_{\lambda(t)}^{h^+(0,t)}|(h^{-1}(w,t))'|dw= \int_{\lambda(t)}^{h^+(0,t)}
\left|\left(\sum_{n=1}^{\infty}b_n(t)(w-h^+(0,t))^{n/2}\right)'\right|dw$$ $$
=\frac{1}{2}\int_{\lambda(t)}^{h^+(0,t)}
\left|\sum_{n=1}^{\infty}nb_n(t)(w-h^+(0,t))^{\frac{n}{2}-1}\right|dw$$ $$
\leq\frac{1}{2}\int_{\lambda(t)}^{h^+(0,t)}
\sum_{n=1}^{\infty}n|b_n(t)|(h^+(0,t)-w)^{\frac{n}{2}-1}dw=
\sum_{n=1}^{\infty}|b_n(t)|(h^+(0,t)-\lambda(t))^{\frac{n}{2}}.$$ This implies that, for every
$\varepsilon>0$, we have \begin{equation}s(t)t^{-\frac{1}{(b-c)^2}-\frac{1}{4}+\varepsilon}\leq
\sum_{n=1}^{\infty}|b_n(t)|t^{-\frac{1}{(b-c)^2}+\frac{n-1}{4}+\varepsilon}
\left(\frac{h^+(0,t)-\lambda(t)}{\sqrt t}\right)^{\frac{n}{2}}. \label{PA2}
\end{equation} Therefore, given $\varepsilon>0$, the limit $s(t)t^{-1/(b-c)^2-1/4+\varepsilon}\to0$ holds as
$t\to+0$, which
completes the proof. \end{proof}

\begin{theorem}\label{th3}
Let the L\"owner differential equation \eqref{Loewner4} generate slit maps $h(z,t): \mathbb H\setminus\gamma(t)\to \mathbb H$,
where $\gamma(t)\cup\bar{\gamma}(t)\cup\gamma(0)$ is an analytic curve which is mapped onto $[h^-(0,t),h^+(0,t)]$.
Then for the arc-length parameter $s$, $s(t)=A\sqrt{t}+o(\sqrt{t})$, $A\neq 0$, as $t\to +0$.
\end{theorem}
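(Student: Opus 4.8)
The plan is to read off the asymptotics of $s(t)$ from the half-plane capacity of the slit, using that the normalization $h(z,t)=z+2t/z+O(1/z^2)$ means precisely that $\mathrm{hcap}(\gamma[0,s(t)])=2t$, where $\mathrm{hcap}(K)$ denotes the coefficient of $1/z$ in the map normalizing $\mathbb H\setminus K$ back to $\mathbb H$. The whole argument rests on the elementary dilation rule for capacity, so no refined coefficient analysis is needed; in contrast to Lemmas \ref{lemma1} and \ref{lemma2}, the limits $b,c$ need not be assumed to exist but instead emerge as a byproduct.

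First I would pin down the geometry of the slit at the tip. Since $\gamma(\sigma)=x(\sigma)+iy(\sigma)$ is analytic in the arc-length parameter $\sigma$ with $x$ even, $y$ odd and $\gamma(0)=0$, unit speed $x'(\sigma)^2+y'(\sigma)^2\equiv1$ forces $x'(0)=0$ and $y'(0)=\pm1$; as the slit grows into $\mathbb H$ we get $y'(0)=1$, hence $\gamma(\sigma)=i\sigma+O(\sigma^2)$ and the slit leaves the origin orthogonally to $\mathbb R$. (Equivalently, analyticity of the symmetric curve $\gamma\cup\overline\gamma\cup\gamma(0)$ at the real point $0$ forces a purely imaginary tangent there.) Consequently the rescaled curves $\tau\mapsto s^{-1}\gamma(s\tau)=i\tau+O(s)$, $\tau\in[0,1]$, converge uniformly to the straight unit segment $[0,i]$ as $s\to+0$.

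The core step is the capacity estimate. Writing $K_s:=\gamma[0,s]$, the dilation rule $\mathrm{hcap}(rK)=r^2\,\mathrm{hcap}(K)$ gives $\mathrm{hcap}(K_s)=s^2\,\mathrm{hcap}(s^{-1}K_s)$. By the previous paragraph $s^{-1}K_s\to[0,i]$ in the sense of Carath\'eodory kernel convergence of the complementary domains, and since the half-plane capacity is continuous with respect to such convergence (the normalized uniformizing maps converge locally uniformly near $\infty$, hence so do their $1/z$-coefficients), we obtain $\mathrm{hcap}(s^{-1}K_s)\to\mathrm{hcap}([0,i])=\tfrac12$, the last value being computed from $z\mapsto\sqrt{z^2+1}=z+\tfrac{1}{2z}+\dots$. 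Therefore $\mathrm{hcap}(K_s)=\tfrac12 s^2(1+o(1))$ as $s\to+0$.

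Finally I would invert this relation: since $\mathrm{hcap}(\gamma[0,s(t)])=2t$, the estimate reads $2t=\tfrac12 s(t)^2(1+o(1))$, so $s(t)^2=4t(1+o(1))$ and, taking the positive root, $s(t)=2\sqrt t+o(\sqrt t)$; thus the claim holds with $A=2\neq0$. The only genuinely non-routine point is the continuity of $\mathrm{hcap}$ under kernel convergence of the rescaled hulls, which I expect to be the main obstacle; it is standard in L\"owner theory, but to stay self-contained it can be replaced by a two-sided comparison estimate based on the monotonicity of $\mathrm{hcap}$ under inclusion together with the $O(s^2)$ Hausdorff closeness of $\gamma[0,s]$ to the segment $[0,is]$, which yields the same leading constant. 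An alternative, entirely internal route is to combine $s(t)\sim|b_1(t)|\,(h^+(0,t)-\lambda(t))^{1/2}$, read off from the leading term of the integral in Lemma \ref{lemma2}, with the asymptotics of $a_2=b_1^{-2}$ coming from \eqref{AS}; this reproduces $A\neq0$ but requires the limits $b=2$, $c=0$, which the capacity argument instead produces for free.
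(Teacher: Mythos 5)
Your main argument is correct, and it is genuinely different from the paper's proof. The paper never invokes capacity scaling or kernel convergence: it transplants the picture by $\zeta=\sqrt{z^2-1/4}$, so that $\mathbb H\setminus\gamma$ becomes $\mathbb H$ minus the longer slit $[0,\tfrac{i}{2}]\cup\tilde\gamma$, through whose junction point $\tfrac{i}{2}$ the curve passes smoothly (this rests on the same orthogonal-tangency fact you derive from the even/odd symmetry). For the extended slit the time singularity sits at $\tau=0$, not at the junction, so the Earle--Epstein regularity quoted in Section 4 applies there: the arc length $\sigma(\tau)$ is $C^1$ near $\tau=1/16$, equals $2\sqrt\tau$ exactly while the hull is the straight segment ($\tau\le 1/16$), and $\tau=t+1/16$; combining with the local relation $s^2=\sigma-\tfrac12+o(\sigma-\tfrac12)$ coming from the square-root map gives $s^2=\sigma'(1/16)\,t+o(t)=4t+o(t)$. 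Your route instead blows up at the tip: $\mathrm{hcap}(\gamma[0,s])=s^2\,\mathrm{hcap}(s^{-1}\gamma[0,s])$, the rescaled hulls converge to $[0,i]$, and continuity of $\mathrm{hcap}$ under Carath\'eodory kernel convergence yields $2t=\tfrac12 s^2(1+o(1))$. What each buys: the paper stays entirely within machinery it already uses (the square-root transform and Earle--Epstein), while yours is self-contained modulo one standard fact, makes the constant $A=2$ explicit (the paper's $A_1=\sigma'(1/16)=4$ gives the same value, though it is not stated), and makes clear that far less than analyticity is needed --- only a tangent at the tip orthogonal to $\mathbb R$.

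One caution. Your proposed ``elementary'' substitute for kernel-convergence continuity --- monotonicity of $\mathrm{hcap}$ under inclusion plus $O(s^2)$ Hausdorff closeness --- delivers only half of what is needed. Inclusion of $s^{-1}K_s$ in a shrinking sausage around $[0,i]$ gives the upper bound $\mathrm{hcap}(s^{-1}K_s)\le\tfrac12+o(1)$, but the rescaled curve contains neither $[0,i]$ nor any vertical segment, so monotonicity gives no lower bound at all; for that you need either the kernel-convergence argument you sketch first (reflection across $\mathbb R$ plus convergence of the Laurent coefficient at $\infty$, which is sound for uniformly bounded hulls), or a Beurling-type estimate showing that a connected set $\varepsilon$-close to the segment and joining its endpoints has capacity at least $\tfrac12-o(1)$. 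So the kernel-convergence route must remain the actual proof, with the fallback either dropped or repaired. A similar caveat applies to your closing remark that $b=2$ and $c=0$ come ``for free'': extracting $h^+(0,t)$ and $\lambda(t)$ asymptotics requires convergence of the boundary correspondence, which is more than locally uniform convergence in the interior directly provides.
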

\begin{proof}
Let us consider the slit domain $B=\mathbb H\setminus \gamma_s$ in the $z$-plane, with $\gamma_s$ parametrized in the interval $[0,S]$ 
by the arc-length parameter $s$ as $\gamma(s)=x(s)+iy(s)$, where $x(s)$ and $y(s)$ are analytic in $[0,S]$ and $x(s)$ is even, $y(s)$ is odd. 
The slit $\gamma_s$ has another parametrization $\gamma(t)$, $t\in [0,T]$, according to the solution $h(z,t)$ to the corresponding
L\"owner equation \eqref{Loewner4}. Let us  turn to the slit domain $\tilde{B}$ in the $\zeta$-plane where $\zeta(z)=\sqrt{z^2-1/4}$. The domain
$\tilde{B}$ is given by eliminating from $\mathbb H$ the slit along the interval $[0,\frac{i}{2}]$, and the arc $\tilde{\gamma}$, which is the image of $\gamma$
under the map $\zeta(z)$.

Let the slit $[0,\frac{i}{2}]\cup\tilde{\gamma}$ be parametrized by a parameter $\tau$, so that
the function $\tilde{h}(\zeta,\tau): \tilde{B}\to\mathbb H$ with the hydrodynamic
normalization $\tilde{h}(\zeta,t)=\zeta+2\tau/\zeta+O(1/\zeta^2)$ near infinity solves the
corresponding L\"owner equation (4). According to the result of Earle and Epstein [2], the
arc-length parameter $\sigma=\sigma(\tau)$ of the slit is $C^1$-smooth for $\tau>0$. For
$0\leq \tau\leq 1/16$, we have $\tilde{h}(\zeta,\tau)=\sqrt{\zeta^2+4\tau}$, which gives
$\sigma=2\sqrt\tau$. The arc-length parameter $\sigma$ is connected with $s$ as
\[
s^2=\sigma-1/2+o(\sigma-1/2)
\]
near $\sigma=1/2$. From the other hand, for $\tau\geq1/16$,
\[
\tilde{h}(\zeta,\tau)=h(\sqrt{\zeta^2+1/4},t)=
\sqrt{\zeta^2+1/4}+\frac{2t}{\sqrt{\zeta^2+1/4}}+O\left(\frac{1}{\sqrt{\zeta^2+1/4}}\right)
\]
\[
=\zeta+\frac{1/8+2t}{\zeta}+O(1/\zeta)=\zeta+\frac{2\tau}{\zeta}+O(1/\zeta),
\]
which implies that $\tau=t+1/16$. Since the whole slit
$[0,\frac{i}{2}]\cup\tilde{\gamma}(\sigma)$ is $C^1$-smooth, it follows that the corresponding
driving term $\tilde{\lambda}(\tau)$ for $\tilde{h}(\zeta,\tau)$ in (4) is $C^1$ for $\tau>0$,
$\tilde{\gamma}(\tau)=\tilde{h}^{-1}(\tilde{\lambda}(\tau),\tau)\in C^1$, and $\sigma(\tau)\in
C^1$ for $\tau>0$. This implies that
\[\sigma-1/2=A_1\left(\tau-\frac{1}{16}\right)+o\left(\tau-\frac{1}{16}\right)=A_1t+o(t)=s^2+o(t)
\]
near $t=0$ which completes the proof.
\end{proof}

Comparing Theorem \ref{th3} with Lemma \ref{lemma2}, and observing that $\frac{1}{(b-c)^2}\geq 1/4$, where the equality sign is attained
only for $c=0$, $b=2$, we deduce that Lemmas \ref{lemma1} and \ref{lemma2} are valid only for  $c=0$, $b=2$. Therefore, we come  to the following theorem.

\begin{theorem}
Let the L\"owner differential equation \eqref{Loewner4} generate slit maps $h(z,t): \mathbb H\setminus\gamma(t)\to \mathbb H$,
where $\gamma(t)\cup\bar{\gamma}(t)\cup\gamma(0)$ is an analytic curve which is mapped onto $[h^-(0,t),h^+(0,t)]$.
Suppose that the limits 
$$\lim\limits_{t\to+0}\frac{\lambda(t)}{\sqrt t}=c\geq 0,\;\;\;
\lim\limits_{t\to+0}\frac{h^+(0,t)}{\sqrt t}=b$$ exist. Then $c=0$, $b=2$, $\lim\limits_{t\to+0}s(t)t^{-1/2}$
is finite, and given $\epsilon>0$, we have
\[
\lim\limits_{t\to+0}\gamma(t)t^{-\frac{1}{2}+\epsilon}=0.
\]
\end{theorem}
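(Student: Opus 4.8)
The plan is to determine $c$ and $b$ by confronting two asymptotic descriptions of the arc-length parameter $s(t)$: the unconditional law $s(t)=A\sqrt{t}+o(\sqrt{t})$ with $A\neq0$ supplied by Theorem \ref{th3}, and the conditional decay $s(t)t^{-1/(b-c)^2-1/4+\varepsilon}\to0$ supplied by Lemma \ref{lemma2}. Comparing the two pins down $(b-c)^2=4$, after which the a priori upper bound on $b$ isolates $c=0$ and $b=2$; the remaining two assertions are then immediate specializations of Theorem \ref{th3} and Lemma \ref{lemma1}. The theorem is thus largely an assembly of the results already established, the one genuinely new micro-step being the exclusion of the degenerate case $c=b$.

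First I would collect the a priori constraints. Since $\lambda(t)<h^+(0,t)$ for $t>0$, dividing by $\sqrt{t}$ and letting $t\to+0$ gives $c\le b$. The degenerate equality $c=b$ is excluded using the differential equation $t\varphi'(t)=2/(\varphi(t)-\lambda(t)/\sqrt{t})-\varphi(t)/2$ for $\varphi(t)=h^+(0,t)/\sqrt{t}$ recorded in the proof of Theorem \ref{th2}: if $c=b$, then $\varphi(t)-\lambda(t)/\sqrt{t}\to0^+$, so $t\varphi'(t)\to+\infty$, and integrating $\varphi'(t)>M/t$ over a shrinking interval forces $\varphi(t)\to-\infty$, contradicting $\varphi(t)\to b$. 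Hence $0<b-c$, and the hypotheses of Lemmas \ref{lemma1} and \ref{lemma2} are in force. Next, the bound $b\le(c+\sqrt{c^2+16})/2$ proved within Theorem \ref{th2} yields, since $c\ge0$, the chain $b-c\le(\sqrt{c^2+16}-c)/2\le2$, in which the second inequality is strict unless $c=0$; so $b-c\le2$, equivalently $1/(b-c)^2\ge1/4$.

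The core step is the exponent matching. By Theorem \ref{th3}, $s(t)/\sqrt{t}\to A\neq0$, hence $s(t)t^{-1/(b-c)^2-1/4+\varepsilon}\sim A\,t^{1/4-1/(b-c)^2+\varepsilon}$ as $t\to+0$. Lemma \ref{lemma2} forces this to vanish for every $\varepsilon>0$; were $1/(b-c)^2>1/4$, then choosing $0<\varepsilon<1/(b-c)^2-1/4$ makes the exponent negative and the expression blow up (because $A\neq0$), a contradiction. Thus $1/(b-c)^2\le1/4$, and together with the a priori $1/(b-c)^2\ge1/4$ we obtain $1/(b-c)^2=1/4$, i.e.\ $b-c=2$. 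Then both inequalities in $b-c\le(\sqrt{c^2+16}-c)/2\le2$ are equalities, the second of which forces $c=0$, whence $b=2$. I expect this matching, together with the innocuous-looking but indispensable strict inequality $c<b$ that alone legitimizes invoking Lemma \ref{lemma2}, to be the main obstacle.

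Finally the two remaining assertions follow by specialization. Theorem \ref{th3} directly gives $\lim_{t\to+0}s(t)t^{-1/2}=A$, which is finite. For the slit, Lemma \ref{lemma1} gives $\gamma(t)t^{-1/(b-c)^2-1/4+\varepsilon}\to0$; inserting $1/(b-c)^2=1/4$ collapses the exponent to $-1/2+\varepsilon$, so $\lim_{t\to+0}\gamma(t)t^{-1/2+\varepsilon}=0$ for every $\varepsilon>0$, completing the proof.
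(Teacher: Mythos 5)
Your proof is correct and takes essentially the same route as the paper: the paper's entire argument for this theorem is the comparison of Theorem \ref{th3} ($s(t)=A\sqrt{t}+o(\sqrt{t})$, $A\neq 0$) with Lemma \ref{lemma2}, together with the observation that $1/(b-c)^2\geq 1/4$ with equality only for $c=0$, $b=2$, after which Lemma \ref{lemma1} gives the statement about $\gamma(t)$. Your extra step excluding the degenerate case $c=b$ via the ODE for $\varphi(t)=h^+(0,t)/\sqrt{t}$ (needed before the hypotheses of Lemmas \ref{lemma1} and \ref{lemma2} and the bound from Theorem \ref{th2} can be invoked) is sound and in fact patches a point the paper leaves implicit.
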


The latter theorem generalizes the results of \cite{Kadanoff} which are given for the particular cases of slits. One of them
is a rectilinear slit and the other one is a circular arc, both orthogonal to the real axis.

\end{document}